\newcommand{\excise}[1]{}%{$\star$\textsc{#1}$\star$}
\newtheorem{thm}{Theorem}[section]
\newtheorem{lemma}[thm]{Lemma}
\newtheorem{cor}[thm]{Corollary}
\newtheorem{prop}[thm]{Proposition}
\theoremstyle{definition}
\newtheorem{example}[thm]{Example}
\newtheorem{remark}[thm]{Remark}
\newtheorem{defn}[thm]{Definition}
\newtheorem{alg}[thm]{Algorithm}
\numberwithin{equation}{section}
\newcommand{\ring}[1]{\ensuremath{\mathbb{#1}}}
\renewcommand\>{\rangle}
\newcommand\<{\langle}
\newcommand\NN{\ring{N}}
\newcommand\ZZ{\ring{Z}}
\newcommand\kk{\mathbb K}
\DeclareMathOperator\In{In} % In
\DeclareMathOperator\supp{supp} % support
\newcommand\dist{{\mathsf d}}
\newcommand\ff{\phantom{-}}
\begin{document}%%%%%%%%%%%%%%%%%%%%%%%%%%%%%%%%%%%%%%%%%%%%%%%%%%%%%%%%
%%%%%%%%%%%%%%%%%%%%%%%%%%%%%%%%%%%%%%%%%%%%%%%%%%%%%%%%%%%%%%%%%%%%%%%%

\mbox{}
%\vspace{-2ex}%-1.1743pt}
\title{On the computation of factorization invariants for affine semigroups}

\author[Garc\'ia-S\'anchez]{Pedro Garc\'ia-S\'anchez}
\address{Departamento de \'Algebra\\ Universidad de Granada\\18071 Granada, Espa\~na}
\email{pedro@ugr.es}

\author[O'Neill]{Christopher O'Neill}
\address{Mathematics\\University of California, Davis\\One Shields Ave\\Davis, CA 95616}
\email{coneill@math.ucdavis.edu}

\author[Webb]{Gautam Webb}
\address{Mathematics Department\\University of Oregon\\Eugene, OR 97403}
\email{gwebb@uoregon.edu}

\date{\today}

\begin{abstract}
We present several new algorithms for computing factorization invariant values over affine semigroups.  In particular, we give (i) the first known algorithm to compute the delta set of any affine semigroup, (ii) an improved method of computing the tame degree of an affine semigroup, and (iii) a dynamic algorithm to compute catenary degrees of affine semigroup elements.  Our algorithms rely on theoretical results from combinatorial commutative algebra involving Gr\"obner bases, Hilbert bases, and other standard techniques.  Implementation in the computer algebra system GAP is discussed.  
\end{abstract}

\maketitle

% \setcounter{tocdepth}{1}
% \tableofcontents

%%%%%%%%%%%%%%%%%%%%%%%%%%%%%%%%%%%%%%%%%%%%%%%%%%%%%%%%%%%%%%%%%%%%%%%%%
\section{Introduction} \label{sec:intro}%%%%%%%%%%%%%%%%%%%%%%%%%%%%%%%%%%%%
%raggedbottom%%%%%%%%%%%%%%%%%%%%%%%%%%%%%%%%%%%%%%%%%%%%%%%%%%%%%%%%%%%%

Let $\Gamma = \<\alpha_1, \ldots, \alpha_k\> \subset \NN^d$ be a finitely generated subsemigroup of $\NN^d$, that is, an \emph{affine semigroup}.  We are interested in studying the factorization structure of elements $\gamma \in \Gamma$, that is, the distinct expressions of $\gamma$ as a sum of the generators $\alpha_1, \ldots, \alpha_k$.  Most of the theory on nonunique factorization stems from the study of factorizations in integral domains; in this context, one can view factorizations of affine semigroup elements as factorizations of monomials in the corresponding monoid algebra $\kk[\Gamma]$ for $\kk$ a field.  Often, factorization structure is examined using factorization invariants, which assign quantities to each element of $\Gamma$ (or to $\Gamma$ as a whole) that measure the failure of its factorizations to be unique \cite{g-hk}.  
% Factorization invariants are often combinatorial in nature, and provide concrete methods of quantifying the abundance and variety of factorizations.  
% $\kk[\Gamma] = \kk[t_1^{\gamma_1} \cdots t_d^{\gamma_d}]$.  

Every affine semigroup $\Gamma$ is an \emph{FF-monoid}, meaning that each element $\gamma \in \Gamma$ has only finitely many factorizations.  As such, many factorization invariants can be computed explicitly using computer software \cite{compoverview}, an attribute that has greatly aided their study \cite{numericalsurvey}.  On the other hand, the class of affine semigroups is sufficiently broad to exhibit varied factorization structure, and remains an interesting source of examples and an active area of study.  

In this paper, we present new algorithms for computing several well-studied factorization invariants.  After some preliminary definitions in Section~\ref{sec:background}, we present in Section~\ref{sec:deltaset} the first known algorithm for computing the delta set of an affine semigroup.  Two distinct approaches are given (Algorithms~\ref{a:affinedeltahilbert} and~\ref{a:affinedeltagb}), the latter of which introduces a novel connection between the delta set and Gr\"obner bases of homogeneous toric ideals (see Remark~\ref{r:affinedeltagb}).  
Next, we give Algorithm~\ref{a:dynamiccatenary} for dynamically computing catenary degrees of affine semigroup elements, in the style of several known algorithms for other factorization invariants \cite{dynamicalg}.  
We conclude with an improved algorithm for computing the tame degree of an affine semigroup in Section~\ref{sec:tamedegree}.  

Following the statement and proof of correctness of each algorithm, we discuss implementation in the \texttt{GAP} \cite{gap} package \texttt{numericalsgps} \cite{numericalsgps}, including sample code and benchmark comparisons with existing algorithms.  Whenever a Gr\"obner basis (Definition~\ref{d:grobnerbasis}) or a Hilbert basis (Definition~\ref{d:hilbertbasis}) must be computed, our implementations rely on trusted software packages like \texttt{4ti2}~\cite{4ti2}, \texttt{Normaliz}~\cite{normaliz}, and \texttt{Singular}~\cite{singular}.

\subsection*{Notation}

In what follows, let $\NN = \{0, 1, 2, \ldots\}$ denote the set of non-negative integers.  Fix a field $\kk$, and let $\kk[y_1, \ldots, y_k]$ denote the polynomial ring in commuting variables $y_1, \ldots, y_k$ with coefficients in $\kk$.  Lastly, for $z \in \NN^k$ we use the shorthand $y^z = y_1^{z_1} \cdots y_k^{z_k} \in \kk[y_1, \ldots, y_k]$.  

%%%%%%%%%%%%%%%%%%%%%%%%%%%%%%%%%%%%%%%%%%%%%%%%%%%%%%%%%%%%%%%%%%%%%%%%%
\section{Background}%%%%%%%%%%%%%%%%%%%%%%%%%%%%%%%%%%%%%%%%%%%%%%%%%%%%%
\label{sec:background}%%%%%%%%%%%%%%%%%%%%%%%%%%%%%%%%%%%%%%%%%%%%%%%%%%%
%raggedbottom%%%%%%%%%%%%%%%%%%%%%%%%%%%%%%%%%%%%%%%%%%%%%%%%%%%%%%%%%%%%

\begin{defn}\label{d:affine}
A semigroup $\Gamma \subset \NN^d$ is \emph{affine} if it is finitely generated.  If $\Gamma \subset \NN$ and $\gcd(\Gamma) = 1$, we say $\Gamma$ is a \emph{numerical semigroup}.  
\end{defn}

\begin{remark}\label{r:minimalgenerators}
Any affine semigroup $\Gamma$ is reduced, and thus has a unique generating set that is minimal with respect to containment.  Throughout this paper, whenever we write $\Gamma = \<\alpha_1, \ldots, \alpha_k\>$, we assume $\alpha_1, \ldots, \alpha_k$ are precisely the minimal generators of $\Gamma$.  
% Every affine semigroup is an FF-monoid and thus an BF-monoid.
\end{remark}

\begin{defn}\label{d:hilbertbasis}
Fix a matrix $A \in \ZZ^{k \times d}$.  The set 
\[
\Gamma = \big\{x \in \NN^d : Ax = 0\big\}
\]
of nonnegative integer homogeneous solutions of $A$ forms an affine semigroup.  The minimal generating set of $\Gamma$ (w.r.t.\ containment) is called the \emph{Hilbert basis} of $A$, denoted $\mathcal H(\Gamma)$.  
% The set $\mathcal H(\Gamma)$ corresponds with $\mathrm{Minimals}_\le(\Gamma\setminus\{0\})$, that is, the set of nonzero minimal elements of $\Gamma$ with respect to the usual partial ordering on $\mathbb N^d$.
\end{defn}

\begin{defn}\label{d:factset}
Fix an affine semigroup $\Gamma = \<\alpha_1, \ldots, \alpha_k\> \subset \NN^d$.  The elements $\alpha_1, \ldots, \alpha_k$ comprising the unique minimal generating set of $\Gamma$ are called \emph{irreducible} elements (or \emph{atoms}).  A \emph{factorization} of $\gamma \in \Gamma$ is an expression 
$$\gamma = z_1\alpha_1 + \cdots + z_k\alpha_k$$
of $\gamma$ as a finite sum of atoms, which we denote by the $k$-tuple $z = (z_1, \ldots, z_k) \in \NN^k$.  Write $\mathsf Z_\Gamma(\gamma)$ for the \emph{set of factorizations} of $\gamma \in \Gamma$, viewed as a subset of $\NN^k$.  
\end{defn}

Since $\alpha_1,\ldots,\alpha_k$ generate $\Gamma$, the monoid homomorphism $\varphi_\Gamma : \mathbb N^k \to \Gamma$ given by
\[
\varphi_\Gamma(z_1, \ldots, z_k) = z_1\alpha_1 + \cdots + z_k\alpha_k
\]
is surjective, and $\Gamma \cong \mathbb N^k/\ker\varphi_\Gamma$, where 
\[
\ker \varphi_\Gamma =\{ (z,w) \in \mathbb N^k\times \mathbb N^k : \varphi_\Gamma(z) = \varphi_\Gamma(w)\}
\]
denotes the \emph{kernel} of $\varphi_\Gamma$.  Notice that $\ker\varphi_\Gamma$ is a \emph{congruence} on $\mathbb N^k$ (i.e.\ an equivalence relation that is closed under translation), and thus it is finitely generated (see \cite{redei}).  In particular, there exists a finite set $\rho\subseteq \ker\varphi_\Gamma$ such that the smallest congruence containing $\rho$ is $\ker\varphi_\Gamma$ (equivalently, $\ker\varphi_\Gamma$ equals the intersection of all congruences containing $\rho$). 

\begin{defn}\label{d:minimalpresentation}
A \emph{minimal presentation} of $\Gamma$ is a generating set $\rho$ of the congruence $\ker\varphi_\Gamma$ that is minimal with respect to containment.  The \emph{Betti elements} of $\Gamma$ are elements of the form $\varphi_\Gamma(z)$ for $(z,w) \in \rho$ (this is independent of the minimal presentation chosen).  
\end{defn}

%Let $M$ be an array with integer coefficients and $c$ columns. The set $F=\{x\in \mathbb N^c\colon Mx=0\}$ is an affine semigroup of $\mathbb N^c$. These semigroups are sometimes called \emph{full} affine semigroups, and they are generated by the set of minimal nonzero elements with respect to the usual partial ordering on $\mathbb N^c$. This set of generators is known as a \emph{Hilbert basis} of $M$, and we denote it by $\mathcal H(F)$. 

\begin{defn}\label{d:graverbasis}
Fix $\Gamma = \<\alpha_1, \ldots, \alpha_k\> \subset \NN^d$, and let $A = (\alpha_1 \mid \cdots \mid \alpha_k)$ denote the matrix with columns $\alpha_1, \ldots, \alpha_k$.  The \emph{Graver basis} of $A$ (or the \emph{Graver basis} of $\Gamma$) is the Hilbert basis of the matrix $(A \mid -A)$.  
% We define the Graver basis of $\Gamma$ as the Graver basis of $A$.
\end{defn}

\begin{remark}\label{r:graverminpres}
The Graver basis of a semigroup $\Gamma$ contains every minimal presentation of $\Gamma$, but is typically orders of magnitude larger.  Even for numerical semigroups with 3 minimal generators, whose minimal presentations have at most 3 relations, the Graver basis can be arbitrarily large.  As such, algorithms relying on a proper subset of the Graver basis (such as a minimal presentation or a Gr\"obner basis) are preferred whenever possible.  
\end{remark}

\begin{example}\label{e:minimalpresentation}
Let $\Gamma = \<3,4,5\> \subseteq \NN$.  The minimal presentation of $\Gamma$ is 
\[
\rho=\{((1,0,1),(0,2,0)),((2,1,0),(0,0,2)),((3,0,0),(0,1,1))\},
\]
as computed in \cite[Example 8.23]{numerical}.  In this case, $\rho$ is the only minimal presentation of $\Gamma$, though minimal presentations need not be unique in general.  For comparison, the Graver basis of $\Gamma$ consists of (upon omitting symmetry) the pairs
\[
\begin{array}{ll}
((1, 0, 1), (0, 2, 0)),
((1, 3, 0), (0, 0, 3)),
((2, 1, 0), (0, 0, 2)),\\
((3, 0, 0), (0, 1, 1)),
((4, 0, 0), (0, 3, 0)),
((5, 0, 0), (0, 0, 3)).
% full thing, courtesy of normaliz
% ((0, 0, 1), (0, 0, 1)),
% ((0, 0, 2), (2, 1, 0)),
% ((0, 0, 3), (1, 3, 0)),
% ((0, 0, 3), (5, 0, 0)),
% ((0, 0, 4), (0, 5, 0)),
% ((0, 1, 0), (0, 1, 0)),
% ((0, 1, 1), (3, 0, 0)),
% ((0, 2, 0), (1, 0, 1)),
% ((0, 3, 0), (4, 0, 0)),
% ((0, 5, 0), (0, 0, 4)),
% ((1, 0, 0), (1, 0, 0)),
% ((1, 0, 1), (0, 2, 0)),
% ((1, 3, 0), (0, 0, 3)),
% ((2, 1, 0), (0, 0, 2)),
% ((3, 0, 0), (0, 1, 1)),
% ((4, 0, 0), (0, 3, 0)),
% ((5, 0, 0), (0, 0, 3))
\end{array}
\]
\end{example}

We associate to $\Gamma$ an ideal $I_\Gamma \subset \kk[y_1, \ldots, y_k]$ defined as follows.  Consider the polynomial ring $\kk[t_1, \ldots, t_d]$, and more specifically the subring $\kk[\Gamma] = \bigoplus_{\gamma \in \Gamma} \mathbb{K} t^\gamma$.  In this setting, $\varphi_\Gamma$ has a polynomial ring counterpart, namely the ring homomorphism 
\[
\psi_\Gamma : \mathbb K[y_1,\ldots, y_k]\to \mathbb K[\Gamma]
\]
determined by $\psi_\Gamma(y_i) = t^{\alpha_i}$. The \emph{defining ideal} of $\Gamma$ is the kernel
\[
I_\Gamma = \ker\psi_\Gamma \subset \kk[y_1, \ldots, y_k].
\]
It is well known that a set $\rho \subset \ker\varphi_\Gamma$ generates $\ker\varphi_\Gamma$ as a congruence if and only if 
\[
\left\{y^z - y^w : (z,w) \in \rho\right\} \subset I_\Gamma
\]
generates $I_\Gamma$, and that $\rho$ is a minimal presentation for $\Gamma$ precisely when the above set irredundantly generates $I_\Gamma$ (see \cite{herzog}).

We conclude this section with a brief discussion of Gr\"obner bases, which are standard in computational commutative algebra; the unfamiliar reader is encouraged to consult~\cite{clo} for a more thorough overview.  

\begin{defn}\label{d:grobnerbasis}
Fix a monomial order $\preceq$ on $\kk[y_1, \ldots, y_k]$ (that is, a total ordering of the monomials containing the usual partial ordering by divisibility).  Given $f \in \kk[y_1, \ldots, y_k]$, denote by $\In_\preceq(f)$ the \emph{initial term} of $f$ with respect to $\preceq$.  A \emph{Gr\"obner basis} of an ideal $I \subset \kk[y_1, \ldots, y_k]$ with respect to $\preceq$ is a set $G = \{g_1, \ldots, g_r\}$ generating $I$ such that $\<\In_\preceq(g_1), \ldots, \In_\preceq(g_r)\> = \langle \{ \In_\preceq(f) \colon f\in I\} \rangle$.  
% We say $G$ is \emph{reduced} if $g_1,\ldots, g_r$ are monic and $\In_\preceq(g_i)$ divides no term of $g_j$ for $i \ne j$.
\end{defn}

% \begin{remark}\label{r:reducedgb}
% A reduced Gr\"obner basis is unique for fixed $I$ and $\preceq$ (see for instance~\cite{clo}).  
% \end{remark}

\begin{remark}\label{r:elimination}
Minimal presentations can be computed using Gr\"obner bases. Set 
\[
J = \<y_1 - t^{\alpha_1}, \ldots, y_k - t^{\alpha_k}\> \subseteq \kk[y_1, \ldots, y_k, t_1, \ldots, t_d],
\]
and observe that $I_\Gamma = J \cap \kk[y_1, \ldots, y_k]$.  To compute a generating set for $I_\Gamma$, one can compute a Gr\"obner basis $G$ for $J$ with respect to any monomial ordering $\preceq$ satisfying $y_i \preceq t_j$ for all possible $i$ and $j$, and then calculate $G \cap \kk[y_1,\ldots, y_k]$.
\end{remark}

%%%%%%%%%%%%%%%%%%%%%%%%%%%%%%%%%%%%%%%%%%%%%%%%%%%%%%%%%%%%%%%%%%%%%%%%%
\section{The delta set}%%%%%%%%%%%%%%%%%%%%%%%%%%%%%%%%%%%%%%%%%%%%%%%%%%
\label{sec:deltaset}%%%%%%%%%%%%%%%%%%%%%%%%%%%%%%%%%%%%%%%%%%%%%%%%%%%%%
%raggedbottom%%%%%%%%%%%%%%%%%%%%%%%%%%%%%%%%%%%%%%%%%%%%%%%%%%%%%%%%%%%%

In this section, we introduce Algorithms~\ref{a:affinedeltahilbert} and~\ref{a:affinedeltagb}, the first known algorithms for computing the delta set of any affine semigroup, as well as proofs of correctness (Theorems~\ref{t:affinedeltahilbert} and~\ref{t:affinedeltagb}, respectively).  Both algorithms have Theorem~\ref{t:deltahilbert} at their core, but each uses a different method for computing the necessary ideal generating sets.  
% Preceeding Algorithm~\ref{a:affinedeltahilbert} is a discussion of various design decisions made by the authors.  
% Following Theorem~\ref{t:affinedeltahilbert} are benchmarks and specific implementation details.  

\begin{defn}\label{d:deltaset}
Fix an affine semigroup $\Gamma = \<\alpha_1, \ldots, \alpha_k\> \subset \NN^d$ and $\gamma \in \Gamma$.  Given $z \in \mathsf Z_\Gamma(\gamma)$, the \emph{length} of $z$ is the number $|z| = z_1 + \cdots + z_k$ of irreducibles in $z$.  The \emph{length set} of $\gamma$ is
\[
\mathsf L_\Gamma(\gamma) = \{z_1 + z_2 + \cdots + z_k : z \in \mathsf Z_\Gamma(\gamma)\},
\]
the set of factorization lengths.  Writing $\mathsf L_\Gamma(\gamma) = \{\ell_1 < \cdots < \ell_m\}$, the \emph{delta set} of $\gamma$ is the set
$$\Delta(\gamma) = \{\ell_{i+1} - \ell_i : 1 \le i < m\}$$
of successive differences of factorization lengths.  The \emph{delta set of $\Gamma$} is $\Delta(\Gamma) = \bigcup_{\gamma \in \Gamma} \Delta(\gamma)$.  
% We say $\Gamma$ is \emph{half-factorial} if $|\mathsf L_\Gamma(\gamma)| = 1$ for all $\gamma \in \Gamma$.  
\end{defn}

Theorem~\ref{t:minmaxdeltaset} follows from \cite[Corollary 2.4, Thoerem 2.5]{max-delta}, and describes which delta set elements can be easily recovered from a minimal presentation.  

\begin{thm}\label{t:minmaxdeltaset}
If $\Gamma = \<\alpha_1, \ldots, \alpha_k\> \subset \NN^d$ and $\rho \subset (\NN^k)^2$ is a minimal presentation of $\Gamma$, then 
$$\min\Delta(\Gamma) = \gcd(\{|z| - |w| : (z,w) \in \rho\}),$$
$$\max\Delta(\Gamma) = \max\{\max\Delta(z_1\alpha_1 + \cdots + z_k\alpha_k) \mid (z,w) \in \rho\}$$
and 
\[
\Delta(\Gamma)\subseteq [\min\Delta(\Gamma),\max\Delta(\Gamma)]\cap \min\Delta(\Gamma)\mathbb Z.
\]
In particular, $\min\Delta(\Gamma) = \gcd\Delta(\Gamma)$, and both $\min\Delta(\Gamma)$ and $\max\Delta(\Gamma)$ can be quickly recovered from $\rho$.  
\end{thm}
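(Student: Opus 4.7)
Write $d = \gcd\{|z|-|w| : (z,w) \in \rho\}$ and
\[
M = \max\{\max\Delta(\varphi_\Gamma(z)) : (z,w) \in \rho\};
\]
the theorem splits into three claims: $\min\Delta(\Gamma) = d$, $\max\Delta(\Gamma) = M$, and the containment $\Delta(\Gamma) \subseteq [\min\Delta(\Gamma),\max\Delta(\Gamma)] \cap \min\Delta(\Gamma)\ZZ$. The unifying tool is a chain argument: because $\rho$ generates $\ker\varphi_\Gamma$ as a congruence, any two factorizations $a, a' \in \mathsf Z_\Gamma(\gamma)$ of a common element are connected by a sequence $a = a_0, a_1, \ldots, a_n = a'$ inside $\mathsf Z_\Gamma(\gamma)$, where each $a_{j+1}$ is obtained from $a_j$ by subtracting a summand $w$ from $a_j$ (which must be dominated componentwise) and adding $z$, for some $(z,w)\in\rho$ up to swap. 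Each step changes length by $|z|-|w|$, so telescoping writes $|a'|-|a|$ as an integer combination of the differences $|z|-|w|$ appearing in $\rho$.

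This immediately yields $d \mid \delta$ for every $\delta \in \Delta(\Gamma)$, hence $d \mid \gcd\Delta(\Gamma)$. Conversely, each $|z|-|w|$ is itself a length difference inside $\mathsf L_\Gamma(\varphi_\Gamma(z))$, so decomposes as a sum of consecutive gaps, each of which lies in $\Delta(\varphi_\Gamma(z)) \subseteq \Delta(\Gamma)$; thus $\gcd\Delta(\Gamma) \mid d$, giving $d = \gcd\Delta(\Gamma)$. To upgrade this to $\min\Delta(\Gamma) = d$ and to supply the divisibility in the stated containment, I would import from the cited max-delta paper the fact that $\min\Delta(\Gamma) = \gcd\Delta(\Gamma)$. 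For the $\max$ formula, $M \le \max\Delta(\Gamma)$ is immediate since each Betti element lies in $\Gamma$. For the reverse inequality, given $\delta \in \Delta(\gamma)$ witnessed by consecutive lengths $\ell < \ell+\delta$ in $\mathsf L_\Gamma(\gamma)$ with factorizations $a,b$ of these lengths, I connect them via the chain and observe that since no factorization of $\gamma$ has length strictly between $\ell$ and $\ell+\delta$, some single move $a_j \to a_{j+1}$ must span the gap; the corresponding relation $(z,w) \in \rho$ then satisfies $|z|-|w| \ge \delta$.

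The main obstacle is the last implication: the inequality $|z|-|w| \ge \delta$ only provides a total length spread of at least $\delta$ inside $\mathsf L_\Gamma(\varphi_\Gamma(z))$, not a single consecutive gap of that size. To conclude $\max\Delta(\varphi_\Gamma(z)) \ge \delta$, I would invoke the minimality of $\rho$: a minimal pair $(z,w)$ links distinct $\mathcal R$-classes of factorizations of the Betti element $\varphi_\Gamma(z)$, and this structural constraint, applied inside $\mathsf Z_\Gamma(\varphi_\Gamma(z))$ itself via the same chain machinery, forces a consecutive gap of size at least $\delta$ in $\mathsf L_\Gamma(\varphi_\Gamma(z))$. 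This is the essential content of the results in \cite{max-delta} that I would cite to close the argument, after which the final containment follows by combining the divisibility $\min\Delta(\Gamma) = \gcd\Delta(\Gamma)$ with the definitions of $\min$ and $\max$.
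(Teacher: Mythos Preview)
The paper does not give a proof of this theorem. The sentence immediately preceding the statement says it ``follows from \cite[Corollary 2.4, Theorem 2.5]{max-delta}'', and nothing further is argued. So there is no approach in the paper to compare your sketch against; the result is imported wholesale.

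That said, your sketch is a reasonable partial unpacking of what the cited paper proves. The chain argument correctly gives $d = \gcd\Delta(\Gamma)$, and your identification of the obstacle in the $\max$ direction is accurate: from a step $a_j \to a_{j+1}$ whose length jump is at least $\delta$ you only get two lengths in $\mathsf L_\Gamma(\varphi_\Gamma(z))$ differing by at least $\delta$, not a consecutive gap of that size. Closing this genuinely requires the $\mathcal R$-class analysis of Betti elements carried out in \cite{max-delta}, which you correctly propose to cite. Since both you and the paper ultimately defer the substantive content to that reference, your proposal is consistent with the paper's treatment, just more explicit about which parts are routine and which are not.
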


\begin{example}\label{e:deltaset}
Let us return to $\Gamma = \<3,4,5\>$ from Example~\ref{e:minimalpresentation}.  
% We have $\mathsf Z_\Gamma(10) = \{(2,1,0),(0,0,2)\}$.  Consequently $\mathsf L_\Gamma(10)=\{2,3\}$ and $\mathsf \Delta(10)=\{1\}$.  
According to Theorem~\ref{t:minmaxdeltaset}, the minimum and maximum of $\Delta(\Gamma)$ are determined by $\Delta(8)$, $\Delta(9)$ and $\Delta(10)$.  The only factorizations of $8$ are $(0,2,0)$ and $(1,0,1)$, so $\Delta(8)=\emptyset$.  We can obtain $\Delta(9) = \Delta(10) = \{1\}$ similarly.  As such, $\Delta(\Gamma)=\{1\}$, since in this case $\min\Delta(\Gamma) = \max\Delta(\Gamma)$.  In general, however, Theorem~\ref{t:minmaxdeltaset} only yields an interval containing $\Delta(\Gamma)$.
\end{example}

\begin{remark}\label{r:deltadifficulty}
The primary difficulty in computing $\Delta(\Gamma)$ for a general affine semigroup $\Gamma$ is ensuring that a given value does \emph{not} occur in $\Delta(\Gamma)$.  
% without essentially knowing the delta set of every element of $\Gamma$.  
Indeed, some elements of $\Delta(\Gamma)$ may only occur in the delta sets of a small (finite) number of semigroup elements.  For example, if $\Gamma = \<17,33,53,71\> \subset \NN$, then $6 \in \Delta(\Gamma)$ only occurs in $\Delta(266)$, $\Delta(283)$, and $\Delta(300)$.  

As such, although it is computationally feasible to compute the delta set of any single element of $\Gamma$ (since each has only finitely many factorizations), this cannot be accomplished for all of the (infinitely many) elements of $\Gamma$.  To date, all existing delta set algorithms are limited to numerical semigroups, and act by bounding the semigroup elements for which ``new'' elements of $\Delta(\Gamma)$ can occur, thus restricting computation to a finite list of semigroup elements; see \cite{dynamicalg} for more detail.  

One of the primary benefits of Algorithm~\ref{a:affinedeltahilbert} is that it does not rely on computing delta sets of individual semigroup elements.  As such, both of our algorithms are generally much faster than existing algorithms for numerical semigroups; see Examples~\ref{e:affinedeltahilbert} and~\ref{e:affinedeltagb}.  
\end{remark}

We now state Theorem~\ref{t:deltahilbert}, the main theoretical result used in Algorithm~\ref{a:affinedeltahilbert}.  

\begin{thm}[{\cite[Theorem~4.8]{factorhilbert}}]\label{t:deltahilbert}
Suppose $\Gamma = \<\alpha_1, \ldots, \alpha_k\> \subset \NN^d$.  
The ideals
$$I_j = \left\<y^z - y^w : z, w \in \mathsf Z_\Gamma(\gamma), \gamma \in \Gamma, \text{ and } \big||w| - |z|\big| \le j\right\> \subset \kk[y_1, \ldots, y_k]$$
% a_1\alpha_1 + \cdots + a_r\alpha_r = b_1\alpha_1 + \cdots + b_r\alpha_r
for $j \ge 0$ form an ascending chain 
$$I_0 \subset I_1 \subset I_2 \subset \cdots$$
in which $I_{j-1} \subsetneq I_{j}$ if and only if $j \in \Delta(\Gamma)$.  
% and each $\gamma \in \Gamma$ satisfies $\mathcal H(I_j/I_{j-1};\gamma) > 0$ if and only if $j \in \Delta(\gamma)$.  
\end{thm}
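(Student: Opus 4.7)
My plan is to reduce everything to the standard fact that, for a binomial ideal of the form $I_j$ (generated by pure binomials lying in $I_\Gamma$), membership $y^a - y^b \in I_j$ is equivalent to the existence of a chain $a = u_0, u_1, \ldots, u_n = b$ in $\NN^k$, all sharing a common image $\gamma = \varphi_\Gamma(a) = \varphi_\Gamma(b)$ under $\varphi_\Gamma$, such that each consecutive pair $(u_s, u_{s+1})$ is a common translate $(v + z', v + w')$ of some generating pair $(z', w')$. Because translation preserves the length difference, each such step satisfies $\bigl||u_s| - |u_{s+1}|\bigr| \le j$. With this correspondence in hand, the ideal question becomes a combinatorial question about walks in the factorization set of a common semigroup element.

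The chain $I_0 \subseteq I_1 \subseteq \cdots$ is immediate, since every generator of $I_{j-1}$ is also a generator of $I_j$.

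For the forward direction, assume $j \in \Delta(\Gamma)$, pick $\gamma \in \Gamma$ with consecutive lengths $\ell < \ell + j$ in $\mathsf L_\Gamma(\gamma)$, and pick factorizations $z, w \in \mathsf Z_\Gamma(\gamma)$ of these lengths. Then $y^z - y^w$ is trivially a generator of $I_j$. To show it does not lie in $I_{j-1}$, I argue by contradiction: a chain as above inside $I_{j-1}$ would produce factorizations $u_0, \ldots, u_n$ of $\gamma$ with length jumps of size at most $j-1$, starting at length $\ell$ and ending at length $\ell + j$. But by the consecutivity of $\ell$ and $\ell+j$ in $\mathsf L_\Gamma(\gamma)$, no $|u_s|$ lies in the open interval $(\ell, \ell + j)$, so at some step the walk is forced to jump from a length at most $\ell$ to one at least $\ell + j$, a jump of size at least $j$, a contradiction.

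For the reverse direction, assume $j \notin \Delta(\Gamma)$; it suffices to show that each generator $y^z - y^w$ of $I_j$ already lies in $I_{j-1}$. After relabeling assume $|z| \le |w|$. If $|w| - |z| \le j-1$, the binomial is itself a generator of $I_{j-1}$. Otherwise $|w| - |z| = j$, and since $j \notin \Delta(\varphi_\Gamma(z))$ the lengths $|z|$ and $|w|$ cannot be consecutive in $\mathsf L_\Gamma(\varphi_\Gamma(z))$, so some factorization $u$ of $\varphi_\Gamma(z)$ has $|z| < |u| < |w|$. Then $|u| - |z|$ and $|w| - |u|$ are both strictly less than $j$, so writing $y^z - y^w = (y^z - y^u) + (y^u - y^w)$ exhibits it as a sum of two generators of $I_{j-1}$.

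The main obstacle is the careful use of the ideal-congruence dictionary for pure binomial subideals of $I_\Gamma$, needed to certify $y^z - y^w \notin I_{j-1}$ in the forward direction; this is standard but is the only step beyond elementary length bookkeeping. Everything else reduces to a short combinatorial argument about which intermediate lengths can appear.
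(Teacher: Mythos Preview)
The paper does not prove this theorem; it is quoted from \cite{factorhilbert} and used as a black box, so there is no in-paper proof to compare against.

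Your argument is correct. The congruence characterization you invoke---that $y^a - y^b$ lies in a pure-difference binomial ideal $\langle y^{z'} - y^{w'} : (z',w') \in S\rangle$ if and only if $(a,b)$ lies in the congruence on $\NN^k$ generated by $S$---is exactly the dictionary the paper alludes to around Remark~\ref{r:elimination} and \cite{herzog}, and it applies here because every generator of $I_{j-1}$ is a pure difference binomial contained in $I_\Gamma$. Given that, your forward direction is clean: the chain $u_0,\ldots,u_n$ stays inside $\mathsf Z_\Gamma(\gamma)$, so each $|u_s|$ lies in $\mathsf L_\Gamma(\gamma)$ and hence outside the open gap $(\ell,\ell+j)$; the first index $s$ with $|u_s| \ge \ell+j$ then forces $|u_s| - |u_{s-1}| \ge j$, contradicting the step bound $j-1$. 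The reverse direction is equally solid: $j \notin \Delta(\Gamma)$ in particular gives $j \notin \Delta(\varphi_\Gamma(z))$, so $|z|$ and $|w|$ cannot be adjacent in $\mathsf L_\Gamma(\varphi_\Gamma(z))$, and the intermediate factorization $u$ splits the binomial into two generators of $I_{j-1}$.

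This is in fact essentially the argument given in \cite{factorhilbert}, so your approach matches the original source.
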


Algorithm~\ref{a:affinedeltahilbert} obtains the delta set of a given affine semigroup $\Gamma = \<\alpha_1, \ldots, \alpha_k\> \subset \NN^d$ by computing each ideal defined in Theorem~\ref{t:deltahilbert}.  The generating set of $I_j$ appearing in Theorem~\ref{t:deltahilbert} is determined by (integer solutions to) a system of linear inequalities, and thus can be computed using a Hilbert basis.  For example, by inserting a slack variable to transform the single inequality to an equality, it suffices to compute a Hilbert basis for the matrix $A$ below.  Each element $x = (z, w, i) \in \NN^k \times \NN^k \times \NN$ of the Hilbert basis with $i \le j$ yields a binomial generator $y^z - y^w$ for $I_j$ by Lemma~\ref{l:affinedeltahilbert}.  

\begin{center}
$A = \left(\begin{array}{clclllr@{}c}
\alpha_{11} & \cdots & \alpha_{1k} & -\alpha_{11} & \cdots & -\alpha_{1k} & & 0 \\
\vdots & \ddots & \vdots & \ff \vdots & \ddots & \ff \vdots & & \vdots \\
\alpha_{d1} & \cdots & \alpha_{dk} & -\alpha_{d1} & \cdots & -\alpha_{dk} & & 0 \\
\multicolumn{1}{l}{1} & \cdots & \multicolumn{1}{l}{1} & -1 & \cdots & -1 & - & 1
\end{array}\right)$
\end{center}

Once each generating set has been computed, successive ideals $I_{j-1} \subset I_j$ can be checked for strict containment by computing a reduced Gr\"obner basis for each ideal (with respect to the same term order) and then comparing.  The final step in the algorithm is locating the maximal value of $j$, which by Theorem~\ref{t:minmaxdeltaset} can be obtained by computing a minimal presentation for $\Gamma$.  Note that each element of $\Delta(\Gamma)$ is a multiple of $\min\Delta(\Gamma)$, which by Theorem~\ref{t:minmaxdeltaset} can also be obtained from a minimal presentation for $\Gamma$.  

We are now ready to state Algorithm~\ref{a:affinedeltahilbert}.  

\begin{alg}\label{a:affinedeltahilbert}
Computes the delta set of an affine semigroup $\Gamma = \<\alpha_1, \ldots, \alpha_k\> \subset \NN^d$.  
\begin{algorithmic}
\Function{DeltaSetOfAffineSemigroup}{$\Gamma$}
\State $\rho \gets$ minimal presentation for $\Gamma$
\State $D \gets \bigcup_{(z,w) \in \rho} \Delta(z_1\alpha_1 + \cdots + z_k\alpha_k)$
\State $m \gets \gcd D$
\State $H \gets$ Hilbert basis for $\{x \in \NN^{2k+1} : Ax = 0\}$
\State $G \gets$ Gr\"obner basis for $\<y^z - y^w : (z,w,m) \in H \text{ or } (z,w,0) \in H\> \subset \kk[y_1, \ldots, y_k]$
\ForAll{$j = 2m, \ldots, \max D - m$}
	\If{$y^z - y^w$ has nonzero remainder modulo $G$ for some $(z,w,j) \in H$}
		\State $D \gets D \cup \{j\}$
		\State $G \gets$ Gr\"obner basis for $\<G\> + \<y^z - y^w : (z,w,j) \in H\>$
	\EndIf
	% \ForAll{$i = 1, 2, \ldots, k$ with $m - n_i \in S$}
	% 	\State $\mathsf L \gets \mathsf L \cup \{l + 1 : l \in \mathcal L[m - n_i]\}$
	% \EndFor
\EndFor
\State \Return $D$
\EndFunction
\end{algorithmic}
\end{alg}

We now give a proof that Algorithm~\ref{a:affinedeltahilbert} gives the correct output.  

\begin{lemma}\label{l:affinedeltahilbert}
Resume notation from Theorem~\ref{t:deltahilbert} and Algorithm~\ref{a:affinedeltahilbert}.  The ideal
$$J_j = \<y^z - y^w : (z,w,i) \in H, i \le j\> \subset \kk[y_1, \ldots, y_k]$$
computed in Algorithm~\ref{a:affinedeltahilbert} coincides with the ideal $I_j$ from Theorem~\ref{t:deltahilbert} for all $j \ge 0$.  
\end{lemma}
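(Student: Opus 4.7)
The plan is to establish the two inclusions $J_j \subseteq I_j$ and $I_j \subseteq J_j$ separately, after first reading off what the Hilbert basis $H$ actually encodes.

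The first step is to unpack $Ax = 0$ combinatorially. For $x = (z,w,i) \in \NN^{2k+1}$, the first $d$ rows of $A$ force $\varphi_\Gamma(z) = \varphi_\Gamma(w)$, while the last row forces $|z| - |w| = i$; in particular $|z| \ge |w|$ since $i \ge 0$. So every Hilbert basis element $(z,w,i) \in H$ is a pair of factorizations of a common $\gamma = \varphi_\Gamma(z) \in \Gamma$ whose length difference equals $i$. From this the inclusion $J_j \subseteq I_j$ is immediate: each generator $y^z - y^w$ of $J_j$ comes from some $(z,w,i) \in H$ with $i \le j$, hence $z, w \in \mathsf Z_\Gamma(\gamma)$ and $\big||z|-|w|\big| = i \le j$, so $y^z - y^w$ is already a generator of $I_j$.

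The substantive direction is $I_j \subseteq J_j$. Here I would take an arbitrary generator $y^z - y^w$ of $I_j$, with $z, w \in \mathsf Z_\Gamma(\gamma)$ and $\big||z|-|w|\big| \le j$. Swapping $z$ and $w$ if necessary (which only flips a sign), I may assume $|z| \ge |w|$, so that $(z, w, |z|-|w|)$ lies in the solution set $\{x \in \NN^{2k+1} : Ax = 0\}$ and thus decomposes as
\[
(z, w, |z|-|w|) = \sum_s c_s (a_s, b_s, i_s)
\]
with $c_s \in \NN$ and $(a_s, b_s, i_s) \in H$. Because $c_s, i_s \ge 0$ and $\sum_s c_s i_s = |z|-|w| \le j$, every summand with $c_s \ge 1$ automatically has $i_s \le j$, so each relevant $y^{a_s} - y^{b_s}$ is already a listed generator of $J_j$.

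The main obstacle, and the heart of the argument, is turning this additive decomposition of exponent vectors into a multiplicative expression of $y^z - y^w$ inside $J_j$. I expect to do this by induction on $N = \sum_s c_s$, using the standard telescoping identity
\[
y^{z'+a} - y^{w'+b} \;=\; y^{z'}\bigl(y^a - y^b\bigr) \,+\, y^b\bigl(y^{z'} - y^{w'}\bigr).
\]
The base case $N = 1$ is trivial, since then $y^z - y^w$ is itself one of the listed generators of $J_j$. For the inductive step I would pick any $s_0$ with $c_{s_0} \ge 1$, set $a = a_{s_0}$, $b = b_{s_0}$, $z' = z - a$, $w' = w - b$, and note that decreasing $c_{s_0}$ by one exhibits $(z', w', |z'|-|w'|)$ as an $\NN$-combination of the same Hilbert basis elements with total $N - 1$ and length-difference $|z'|-|w'| = (|z|-|w|) - i_{s_0} \le j$. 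The inductive hypothesis then gives $y^{z'} - y^{w'} \in J_j$, and the telescoping identity together with $y^a - y^b \in J_j$ yields $y^z - y^w \in J_j$, closing the induction and the proof.
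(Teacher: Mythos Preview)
Your proof is correct and follows essentially the same approach as the paper: both establish $J_j \subseteq I_j$ trivially and prove $I_j \subseteq J_j$ via the same telescoping identity $y^{z'+a} - y^{w'+b} = y^{z'}(y^a - y^b) + y^b(y^{z'} - y^{w'})$, reducing an arbitrary generator of $I_j$ to an $\NN$-combination of Hilbert basis elements. Your write-up is more explicit---unpacking what $Ax=0$ encodes, handling the sign via the WLOG step, and spelling out the induction on $\sum_s c_s$---whereas the paper compresses the argument into a single instance of the identity and an appeal to additive closure, but the mathematical content is identical.
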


\begin{proof}
For each $i \le j$ and $(z, w, i) \in H$, it is clear that $y^z - y^w \in I_j$, so $J_j \subset I_j$.  Conversely, if $(z, w, i), (z', w', i') \in \<H\>$ with $y^z - y^w, y^{z'} - y^{w'} \in I_j$, then
$$y^{z'}(y^z - y^w) + y^w(y^{z'} - y^{w'}) = y^{z + z'} - y^{w + w'} \in I_j,$$
which is the binomial corresponding to $(z, w, i) + (z', w', i') \in \<H\> \subset \NN^{2k+1}$.  This means the generating set for $I_j$ given in Theorem~\ref{t:deltahilbert} can be restricted to the binomials corresponding to elements of the Hilbert basis $H$, so $I_j \subset J_j$ as well.  
\end{proof}

\begin{thm}\label{t:affinedeltahilbert}
For any affine semigroup $\Gamma \subset \NN^d$, Algorithm~\ref{a:affinedeltahilbert} returns $\Delta(\Gamma)$.  
\end{thm}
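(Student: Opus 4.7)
The plan is to interpret Algorithm~\ref{a:affinedeltahilbert} as a direct implementation of the ideal-chain criterion of Theorem~\ref{t:deltahilbert}, with Theorem~\ref{t:minmaxdeltaset} pinning down the range of indices that must be tested and Lemma~\ref{l:affinedeltahilbert} providing finite Hilbert-basis descriptions of the $I_j$. First, by Theorem~\ref{t:minmaxdeltaset}, the initial $D$ formed from Betti-element delta sets satisfies $\gcd D = \min\Delta(\Gamma) = m$ and $\max D = \max\Delta(\Gamma)$, with $\Delta(\Gamma) \subseteq [m, \max D] \cap m\ZZ$ and with the extreme values $m$ and $\max D$ themselves realized inside $D$ as delta-set values of Betti elements. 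Thus the only indices that might have to be added to $D$ are multiples of $m$ strictly between $m$ and $\max D$, exactly what the loop ranges over.

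Next I would show that the initial $G$ is a Gr\"obner basis for $I_m$. By Lemma~\ref{l:affinedeltahilbert} we have $I_m = \<y^z - y^w : (z,w,i) \in H,\ i \le m\>$; since $\Delta(\Gamma) \cap [1, m-1] = \emptyset$, Theorem~\ref{t:deltahilbert} collapses $I_0 = \cdots = I_{m-1}$, so every binomial arising from $(z,w,i) \in H$ with $0 < i < m$ already lies in the ideal generated by the $i = 0$ binomials. Hence the generators with $i \in \{0, m\}$ suffice, and the analogous collapse $I_m = \cdots = I_{2m-1}$ shows that $G$ represents $I_{2m-1}$ at the start of the first iteration. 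The core of the proof is then the loop invariant ``$\<G\> = I_{j-1}$ at the start of iteration $j$'': combining Lemma~\ref{l:affinedeltahilbert} with the invariant gives $I_j = I_{j-1} + \<y^z - y^w : (z,w,j) \in H\>$, so by Theorem~\ref{t:deltahilbert}, $j \in \Delta(\Gamma)$ precisely when some new binomial has nonzero remainder modulo $G$, which is the branch condition in the pseudocode. When the branch triggers, $G$ is updated to a Gr\"obner basis of $I_j$ and $j$ is adjoined to $D$; otherwise $I_{j-1} = I_j$ and $G$ already represents $I_j$. For $j \notin m\ZZ$ the branch automatically fails, since $\Delta(\Gamma) \subseteq m\ZZ$ forces $I_{j-1} = I_j$. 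The invariant is thus preserved throughout, and at termination $D$ contains the initial Betti contributions together with every $j$ flagged in the loop, i.e.\ all of $\Delta(\Gamma)$.

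The main obstacle is the bookkeeping around which ideal $\<G\>$ represents at each stage---in particular, checking that the binomials omitted at initialization (those with $0 < i < m$) and those skipped inside the loop (those with $i$ not in $m\ZZ$) are already subsumed by the generators the algorithm retains. Once these chain collapses are established, the loop invariant becomes immediate and correctness reduces to repeated application of Theorem~\ref{t:deltahilbert}.
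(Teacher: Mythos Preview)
Your proposal is correct and follows exactly the paper's strategy: use Theorem~\ref{t:minmaxdeltaset} to pin down $m = \min\Delta(\Gamma)$ and the admissible range of $j$, invoke Lemma~\ref{l:affinedeltahilbert} so that the Hilbert-basis binomials cut out each $I_j$, and then read off membership in $\Delta(\Gamma)$ via the strict containments of Theorem~\ref{t:deltahilbert}. Your write-up is simply more explicit than the paper's three-line proof, spelling out the loop invariant $\langle G\rangle = I_{j-1}$ and the chain collapses for indices outside $m\ZZ$ that the paper leaves implicit.
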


\begin{proof}
Resume notation from Theorem~\ref{t:deltahilbert} and Algorithm~\ref{a:affinedeltahilbert}.  Theorem~\ref{t:minmaxdeltaset} implies that $m = \min\Delta(\Gamma)$, and that only multiples of $m$ can appear in $\Delta(\Gamma)$.  Lemma~\ref{l:affinedeltahilbert} ensures each loop iteration computes the ideal $I_j$ for some multiple $j$ of $m$, and appends $j$ to $D$ if $I_{j-1} \subsetneq I_j$.  Applying Theorem~\ref{t:deltahilbert} completes the proof.  
\end{proof}

Algorithm~\ref{a:affinedeltagb} uses an alternative approach to compute generators for the ideals $I_j$ in Theorem~\ref{t:deltahilbert}.  
% We decided to search for an alternative characterization for the Delta sets of an affine semigroup.  
Given $\alpha\in \mathbb N^d$, denote by $(1,\alpha)$ the element in $\mathbb N^{d+1}$ whose first coordinate is~1 and whose remaining coordinates are those in $\alpha$.  For $\Gamma = \<\alpha_1, \ldots, \alpha_k\> \subset \NN^d$, set 
\[
\Gamma_H = \<(1,0),(1,\alpha_1), \ldots, (1,\alpha_k)\> \subset \NN^{d+1}.
\]
There is a tight relation between factorizations in $\Gamma_H$ and those in $\Gamma$ \cite{half-factorial}. Observe that 
\[\mathrm{I}_{\Gamma_H}=\<t^{|w|-|z|}y^z - y^w : z, w \in \mathsf Z(\gamma), \gamma \in \Gamma, |z| \le |w|\> \subset \kk[t, y_1, \ldots, y_k].\] 
Algorithm~\ref{a:affinedeltagb} exploits this idea to compute the generators $I_j$ in Theorem \ref{t:deltahilbert}.  As there is no need to compute a Graver basis (see Remark~\ref{r:graverminpres}), Algorithm~\ref{a:affinedeltagb} is better equipped than Algorithm~\ref{a:affinedeltahilbert} to handle input with large generators (see Example~\ref{e:affinedeltahilbert}).

\begin{alg}\label{a:affinedeltagb}
Computes the delta set of an affine semigroup $\Gamma = \<\alpha_1, \ldots, \alpha_k\> \subset \NN^d$.  
\begin{algorithmic}
\Function{DeltaSetOfAffineSemigroup}{$\Gamma$}
\State $\rho \gets$ minimal presentation for $\Gamma_H = \<(1,0),(1,\alpha_1), \ldots, (1,\alpha_k)\>$
\State $G \gets$ reduced lex Gr\"obner basis for $\<t^iy^z - t^jy^w : ((i,z), (j,w)) \in \rho\> \subset \kk[t, y_1, \ldots, y_k]$
\State \Return $\{j : t^jy^z - y^w \in G\}$
\EndFunction
\end{algorithmic}
\end{alg}

\begin{thm}\label{t:affinedeltagb}
Fix $\Gamma = \<\alpha_1, \ldots, \alpha_k\> \subset \NN^d$ an affine semigroup. 
%, and let
%\[
%J = \<t^{|w|-|z|}y^z - y^w : z, w \in \mathsf Z(\gamma), \gamma \in \Gamma, |z| \le |w|\> \subset \kk[t, y_1, \ldots, y_k]
%% \Gamma_H = \<(1,0),(1,\alpha_1), \ldots, (1,\alpha_k)\>
%\]
%denote the homogenization of $I_\Gamma$.  
Let $G$ denote a reduced Gr\"obner basis for $\mathrm{I}_{\Gamma_H}$ with respect to any lexicographic term order satisfying $t > y_i$ for each $i \le k$, and for $j \ge 0$ let
\[
J_j = \<y^z - y^w : t^iy^z - y^w \in G, i \le j\>.
\]
We have $I_j = J_j$ for each $j \ge 0$. 
% In particular, Algorithm~\ref{a:affinedeltagb} returns the delta set of $\Gamma$.  
\end{thm}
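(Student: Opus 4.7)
The plan is to prove the two containments $J_j \subseteq I_j$ and $I_j \subseteq J_j$ separately.

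The inclusion $J_j \subseteq I_j$ is immediate from the definitions. A generator of $J_j$ has the form $y^z - y^w$ for some $t^i y^z - y^w \in G$ with $i \leq j$; since $G \subset \mathrm{I}_{\Gamma_H}$, the tuples $(i,z)$ and $(0,w)$ represent factorizations of a common element of $\Gamma_H$, forcing $\varphi_\Gamma(z) = \varphi_\Gamma(w)$ and $\big||w|-|z|\big| = i \leq j$, so $y^z - y^w$ is a generator of $I_j$.

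For the reverse inclusion, I would first establish a structural description of $G$: every element of $G$ has the form $t^a y^u - y^v$ with $a = |v|-|u| \geq 0$. Writing a general $g \in G$ as $t^a y^u - t^b y^v$ and setting $c = \min(a,b)$, primality of the toric ideal $\mathrm{I}_{\Gamma_H}$ together with $t \notin \mathrm{I}_{\Gamma_H}$ yields $g/t^c \in \mathrm{I}_{\Gamma_H}$ whenever $c > 0$; its leading term then properly divides $\In_\preceq(g)$, so some $h \in G$ has $\In_\preceq(h)$ properly dividing $\In_\preceq(g)$, contradicting reducedness of $G$. Hence $c = 0$, and $a = |v|-|u|$ is then forced by membership in $\mathrm{I}_{\Gamma_H}$.

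The second step is to combine this with the substitution homomorphism $\pi \colon \kk[t, y_1, \ldots, y_k] \to \kk[y_1, \ldots, y_k]$ defined by $t \mapsto 1$ and $y_i \mapsto y_i$. Note that $\pi$ carries $\mathrm{I}_{\Gamma_H}$ into $I_\Gamma$ and sends each $g = t^a y^u - y^v \in G$ to $y^u - y^v$, which is a generator of $J_j$ whenever $a \leq j$. Given a generator $y^z - y^w$ of $I_j$, assume without loss of generality $|z| \leq |w|$ and set $f = t^{|w|-|z|} y^z - y^w \in \mathrm{I}_{\Gamma_H}$ (which has leading term $t^{|w|-|z|} y^z$). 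The division algorithm yields $f = \sum_i h_i g_i$ with $g_i \in G$ and $\In_\preceq(h_i g_i) \preceq \In_\preceq(f)$ for each $i$; since a lex order with $t > y_i$ is refined by $t$-degree, every monomial of any $h_i g_i$ has $t$-degree at most $|w|-|z| \leq j$. Writing $g_i = t^{a_i} y^{u_i} - y^{v_i}$ by the structural lemma, the term $h_i t^{a_i} y^{u_i}$ of $h_i g_i$ then forces $a_i \leq j$, so each $\pi(g_i)$ is a generator of $J_j$; applying $\pi$ to the decomposition of $f$ gives $y^z - y^w \in J_j$.

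The main obstacle is the structural lemma in the middle paragraph. The $t$-degree bookkeeping in the final paragraph is clean only because elements of $G$ have been shown to have at most one factor of $t$, and extracting this from reducedness together with the primality of $\mathrm{I}_{\Gamma_H}$ is the decisive point of the argument.
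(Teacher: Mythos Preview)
Your argument is correct and follows the same strategy as the paper: reduce $t^{|w|-|z|}y^z - y^w$ modulo $G$ and use that a lex order with $t$ largest bounds the $t$-degree of every Gr\"obner basis element appearing in the division. Your version is in fact more complete than the paper's: the paper simply asserts that the elements of $G$ used in the division ``correspond to generators of $J_j$,'' tacitly assuming each $g\in G$ has the shape $t^a y^u - y^v$, whereas you justify this explicitly with your structural lemma (primality of $\mathrm{I}_{\Gamma_H}$ together with reducedness of $G$). Your substitution $\pi\colon t\mapsto 1$ is a clean way to pass from the standard expression $f=\sum_i h_i g_i$ to membership in $J_j$; the paper instead tracks the division step by step. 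One minor phrasing point: in your final paragraph, $h_i t^{a_i}y^{u_i}$ is not a single term, but what you need (and what your $t$-degree bound already gives) is that $\In_\preceq(h_ig_i)=\In_\preceq(h_i)\cdot t^{a_i}y^{u_i}$ has $t$-degree at least $a_i$, forcing $a_i\le j$.
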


\begin{proof}
Each generator of $J_j$ certainly lies in $I_j$, so $J_j \subset I_j$.  Conversely, fix $y^z - y^w \in I_j$ with $|z| \le |w|$, and let $i = |w| - |z| \le j$.  By definition, $t^iy^z - y^w \in \mathrm{I}_{\Gamma_H}$, so performing polynomial long division by $G$ yields a remainder of 0.  Since $G$ was computed using a lex term order, the only elements of $G$ used in the division algorithm have leading term dividing $t^iy^z$, and thus correspond to generators of $J_j$.  As such, $I_j \subset J_j$.  The final claim follows from $G$ being reduced, since the initial term $t^jy^z$ of each $t^jy^z - y^w \in G$ cannot be divisible by the leading term of any other elements of $G$, including those corresponding to generators of $J_{j-1}$.  
\end{proof}

\begin{cor}\label{c:affinedeltagb}
Fix $\Gamma = \<\alpha_1, \ldots, \alpha_k\> \subset \NN^d$ an affine semigroup. If $G$ is a reduced Gr\"obner basis for $\mathrm{I}_{\Gamma_H}$ with respect to any lexicographic term order satisfying $t > y_i$ for $i \le k$, then 
\[ 
\Delta(\Gamma)=\{ j \colon t^jy^z-y^w\in G\}.
\]
\end{cor}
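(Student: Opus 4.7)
The plan is to derive Corollary~\ref{c:affinedeltagb} directly from Theorems~\ref{t:deltahilbert} and~\ref{t:affinedeltagb}, using reducedness of $G$ in the manner sketched in the final sentence of the proof of Theorem~\ref{t:affinedeltagb}.

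First I would chain the two previous theorems. Theorem~\ref{t:deltahilbert} gives $j\in\Delta(\Gamma)\iff I_{j-1}\subsetneq I_j$, and Theorem~\ref{t:affinedeltagb} gives $I_j=J_j$ where $J_j=\<y^z-y^w:t^iy^z-y^w\in G,\ i\le j\>$. Combining yields
\[
\Delta(\Gamma)=\{j\ge 1:J_{j-1}\subsetneq J_j\},
\]
so the corollary reduces to proving that $J_{j-1}\subsetneq J_j$ if and only if some element $t^jy^z-y^w$ lies in $G$.

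Second, the forward direction is essentially definitional: without an element of $G$ of $t$-degree exactly $j$, the generating sets given above for $J_{j-1}$ and $J_j$ coincide. For the reverse direction, I would assume $t^jy^z-y^w\in G$ and prove $y^z-y^w\notin J_{j-1}$ by contradiction. Writing $y^z-y^w=\sum_l h_l(y^{z_l}-y^{w_l})$ in $\kk[y]$ with $t^{i_l}y^{z_l}-y^{w_l}\in G$ and $i_l\le j-1$, I would multiply through by $t^j$ and systematically apply the relations $t^{j-i_l}(t^{i_l}y^{z_l}-y^{w_l})\in I_{\Gamma_H}$ to rewrite $t^jy^z-y^w$ as an element of $\langle G\setminus\{t^jy^z-y^w\}\rangle$. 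Reducedness of $G$ then delivers the contradiction: the leading term $t^jy^z$ is not divisible by the leading term of any other element of $G$, so $t^jy^z-y^w$ cannot lie in $\langle G\setminus\{t^jy^z-y^w\}\rangle$.

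The main obstacle is the bookkeeping in the previous paragraph. After multiplying by $t^j$, the substitution introduces residual summands of the form $y^{w_l}t^{j-i_l}(t^{i_l}-1)$, together with the term $y^w(t^j-1)$ arising from the identity $t^j(y^z-y^w)=(t^jy^z-y^w)+y^w(t^j-1)$; these must be collected into a single polynomial that also lies in $\langle G\setminus\{t^jy^z-y^w\}\rangle$. This is precisely the observation invoked at the end of the proof of Theorem~\ref{t:affinedeltagb}, and once it is executed cleanly, the remainder of the argument is routine.
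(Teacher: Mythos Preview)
Your forward direction is fine, and your overall reduction to showing $y^z-y^w\notin J_{j-1}$ for each $t^jy^z-y^w\in G$ is the right target. The gap is in the reverse direction: the ``bookkeeping'' you flag is a genuine obstacle that the reducedness observation from Theorem~\ref{t:affinedeltagb} does \emph{not} resolve. After your substitution, the residual polynomial is
\[
R \;=\; y^w(t^j-1)\;-\;\sum_l h_l\,y^{w_l}\bigl(t^j-t^{\,j-i_l}\bigr),
\]
and its $t^j$-coefficient equals $y^w-\sum_{l:\,i_l>0} h_l\,y^{w_l}$, which has no reason to vanish. So $R$ generally has $t$-degree exactly $j$, and reducing $R$ modulo $G$ may well require $t^jy^z-y^w$ itself; you have not shown $R\in\langle G\setminus\{t^jy^z-y^w\}\rangle$. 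The reducedness fact delivers the contradiction only \emph{after} that membership is established---it does not establish it.

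The paper's one-line argument is pointing toward a much more direct route that bypasses the residual entirely. Set $\gamma=\varphi_\Gamma(z)=\varphi_\Gamma(w)$, so $|z|,|w|\in\mathsf L_\Gamma(\gamma)$ with $|w|-|z|=j$. If there were $u\in\mathsf Z_\Gamma(\gamma)$ with $|z|<|u|<|w|$, then $t^{|u|-|z|}y^z-y^u\in I_{\Gamma_H}$ has leading term $t^{|u|-|z|}y^z$; since $G$ is a Gr\"obner basis, some $g'\in G$ has $\In(g')\mid t^{|u|-|z|}y^z\mid t^jy^z$, and reducedness forces $g'=t^jy^z-y^w$, whence $j\le |u|-|z|<j$, a contradiction. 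Thus $|z|$ and $|w|$ are \emph{consecutive} in $\mathsf L_\Gamma(\gamma)$, so $j\in\Delta(\gamma)\subset\Delta(\Gamma)$. This uses exactly the divisibility fact the paper states, but applied to a single well-chosen binomial rather than to an uncontrolled residual sum; equivalently, it shows directly that $y^z-y^w\notin I_{j-1}$, since any chain in $\mathsf Z_\Gamma(\gamma)$ from $z$ to $w$ with length jumps at most $j-1$ would have to pass through such an intermediate $u$.
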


\begin{remark}\label{r:affinedeltagb}
In addition to ensuring the correctness of Algorithm~\ref{a:affinedeltagb}, Corollary~\ref{c:affinedeltagb} is also interesting from a theoretical perspective, as it expresses the delta set in terms of Gr\"obner bases of homogeneous toric ideals.  The authors are optimistic that this novel connection will spur interest in the delta set from members of the computational algebra community.  
\end{remark}

%%%%%%%%%%%%%%%%%%%%%%%%%%%%%%%%%%%%%%%%%%%%%%%%%%%%%%%%%%%%%%%%%%%%%%%%%
\subsection{Implementation and benchmarks}%%%%%%%%%%%%%%%%%%%%%%%%%%%%%%%
%raggedbottom%%%%%%%%%%%%%%%%%%%%%%%%%%%%%%%%%%%%%%%%%%%%%%%%%%%%%%%%%%%%

We implemented both algorithms above in \texttt{GAP} to compare their performance to existing methods.  In our implementation of Algorithm~\ref{a:affinedeltahilbert}, we used \texttt{Normaliz} \cite{normaliz} through \texttt{NormalizInterface} \cite{ni} for the calculation of the Hilbert basis $H$, and \texttt{Singular} \cite{singular} via the \texttt{GAP} package \texttt{singular} \cite{singular-gap} for the computation of successive Gr\"obner bases.  Although we did some experiments with \texttt{4ti2} \cite{4ti2} via \cite{4ti2gap} for the calculation of $H$, the resulting implementation was slower than with \texttt{normaliz}; additionally, one cannot use \texttt{4ti2} for Gr\"obner basis computations here since the ideals are not toric. 

\begin{example}\label{e:affinedeltahilbert}
For numerical semigroups, the algorithm used in the \texttt{numeri\-cal\-sgps} \cite{numericalsgps} package is the one presented in \cite{dynamicalg}, which was the fastest known prior to those given above.  Though it only works with numerical semigroups, we can still compare how it behaves against Algorithm \ref{a:affinedeltahilbert}.  A log-time algorithm has also been given for numerical semigroups with 3~minimal generators~\cite{deltadim3}, but the techniques used therein do not easily generalize.  

In examples with ``small'' generators, Algorithm~\ref{a:affinedeltahilbert} outperforms \texttt{DeltaSet\-Of\-Nume\-rical\-Semigroup} (the current implementation in \texttt{numericalsgps}).  For instance, the calculation of $\Delta(\<101,123,147,199\>)$ takes 550~ms with the current implementation, compared to 50~ms with Algorithm \ref{a:affinedeltahilbert}.  Additionally, the current implementation takes 415263~ms to compute $\Delta(\<101,123,147,199,210\>)$, while Algorithm \ref{a:affinedeltahilbert} accomplishes this task in under 4 seconds. 

For semigroups with large generators, the situation can differ.  Choosing, for instance, $\Gamma = \<1001, 1211, 1421, 1631, 2841\>$, \texttt{DeltaSetOfNumericalSemigroup} computes $\Delta(\Gamma)$ in less than~two~minutes, whereas the processes running \texttt{Normaliz} completed Algorithm~\ref{a:affinedeltahilbert} in around three minutes, even when run with \texttt{SCIP}~\cite{scip} (\texttt{4ti2} was killed after half an hour).  
% The computation of $H$ simply becomes too heavy in such cases. 
% Winfried Bruns has also reported that $H$ computes in under three minutes with \texttt{normaliz} and \texttt{SCIP}~\cite{scip}.
\end{example}

One of the advantages of Algorithm~\ref{a:affinedeltagb} is that one can use either \texttt{Singular} (\texttt{eliminate()}) or \texttt{4ti2} (\texttt{groebner()}) to compute a Gr\"obner basis for $\mathrm{I}_{\Gamma_H}$, with comparable performance.  

\begin{example}\label{e:affinedeltagb}
All delta sets appearing in Example \ref{e:affinedeltahilbert} were computed with Algorithm~\ref{a:affinedeltagb} in less than 20~ms.  Even for significantly larger examples such as 
% \[
% \Gamma = \<440, 477, 516, 3066, 5440, 5678, 5690, 5755\>
% \]
% finish computing $\Delta(\Gamma) = \{1, 2, 3, 4, 5, 6, 7, 8, 9, 10, 11, 12, 16, 17, 21, 22\}$ in under a minute.  
% \[
% \Gamma = \<666, 1001, 1101, 2098, 2950, 3512, 3734, 3794, 4239, 5448, 5519, 5541, 5800, 5913, 6275, 7532, 7988\>
% \]
% finish computing $\Delta(\Gamma) = \{1, 2, 3, 4, 5, 6, 7, 8, 9, 10, 11, 12, 14\}$ in under a minute.  
\[
\Gamma = \<550, 1060, 1600, 1781, 4126, 4139, 4407, 5167, 6073, 6079, 6169, 7097, 7602, 8782, 8872\>,
\]
$\Delta(\Gamma) = \{1, 2, 3, 4, 5, 6, 7, 8, 9, 10, 11, 12, 13, 14, 15, 16, 17, 19\}$ computes in less than a minute.  
\end{example}

%%%%%%%%%%%%%%%%%%%%%%%%%%%%%%%%%%%%%%%%%%%%%%%%%%%%%%%%%%%%%%%%%%%%%%%%%
%\subsection{Implementation and benchmarks}%%%%%%%%%%%%%%%%%%%%%%%%%%%%%%%
%raggedbottom%%%%%%%%%%%%%%%%%%%%%%%%%%%%%%%%%%%%%%%%%%%%%%%%%%%%%%%%%%%%

%: Include general discussion of implementation details.  
%
%
%
%Primary computations using existing software: Hilbert bases, Gr\"obner bases, and minimal presentations (which use Gr\"obner bases).  
%
%Packages to discuss: \texttt{numericalsgps}, \texttt{Singular}, \texttt{4ti2}, and \texttt{Normaliz}.  
%
%: not using \texttt{4ti2} for GB computation in Algorithm~\ref{a:affinedeltahilbert} since ideals not saturated!  

% Primary computation intensity is the Hilbert basis computation.  Gr\"obner basis computation does not increase the size of the generating set (hard to make precise).  Approximate statement: The reduced GB has same size under all graded orderings, each ``adjustment'' from Hilbert basis input.  

%\begin{example}\label{e:delta4ti2vsnormaliz}
%: \texttt{4ti2} vs.\ \texttt{Normaliz} runtime comparison.  \texttt{Normaliz} is faster!  
%\end{example}
%
%\begin{example}\label{e:deltanumericalcomparison}
%: Runtime comparison to existing numerical semigroup algorithms.  The new one is faster!  
%\end{example}

%%%%%%%%%%%%%%%%%%%%%%%%%%%%%%%%%%%%%%%%%%%%%%%%%%%%%%%%%%%%%%%%%%%%%%%%%
\section{The catenary degree: a dynamic algorithm}%%%%%%%%%%%%%%%%%%%%%%%
\label{sec:catenarydegree}%%%%%%%%%%%%%%%%%%%%%%%%%%%%%%%%%%%%%%%%%%%%%%%
%raggedbottom%%%%%%%%%%%%%%%%%%%%%%%%%%%%%%%%%%%%%%%%%%%%%%%%%%%%%%%%%%%%

The catenary degree is a factorization invariant that measures how spread out the factorizations of an element are.  Recent results prove the catenary degree is eventually periodic for numerical semigroups \cite{catenaryperiodic}, and a natural generalization is given for affine semigroups \cite{factorhilbert}, but neither result gives a concrete bound for the start of periodic behavior (in contrast to similar results for several other invariants; see~\cite[Remark~5.11]{numericalsurvey}).  In this section, we give Algorithm~\ref{a:dynamiccatenary} for computing the catenary degree of affine semigroup elements using dynamic programming, answering \cite[Problem~5.12]{numericalsurvey}.  The primary motivation for such an algorithm is to aid the investigation of concrete bounds for the aforementioned eventual behavior results, and our approach mirrors several existing algorithms for other factorization invariants \cite{dynamicalg}.  

\begin{defn}\label{d:catenarydegree}
Fix $\Gamma = \<\alpha_1, \ldots, \alpha_k\> \subset \NN^d$ and $\gamma \in \Gamma$.  The \emph{greatest common divisor} of $z, w \in \mathsf Z(\gamma)$ is given by
$$\gcd(z,w) = (\min(z_1,w_1), \ldots, \min(z_k,w_k)) \in \NN^k,$$
and the \emph{distance} between $z$ and $w$ (or the \emph{weight} of $(z,w)$) is defined as 
$$\dist(z,w) = \max(|z - \gcd(z,w)|,|w - \gcd(z,w)|).$$
Given $z, w \in \mathsf Z(\gamma)$ and $N \ge 0$, an \emph{$N$-chain} from $z$ to $w$ %(or a chain of weight $N$) 
is a sequence $z_0, \ldots, z_r \in \mathsf Z(\gamma)$ of factorizations of $\gamma$ such that (i)~$z_0 = z$, (ii)~$z_r = w$, and (iii)~$\dist(z_{i-1},z_i) \le N$ for all $i \le r$.  The~\emph{weight} of $z_0, \ldots, z_r$ is the smallest $N$ such that $z_0, \ldots, z_r$ is an $N$-chain (or, equivalently, the largest distance between successive factorizations).  The \emph{catenary degree} of $\gamma$, denoted $\mathsf c(\gamma)$, is the smallest $N$ such that there exists an $N$-chain between any two factorizations of $\gamma$.  
% The \emph{set of catenary degrees of $S$} is the set $\mathsf C(S) = \{\mathsf c(m) : m \in S\}$.  
\end{defn}

\begin{example}\label{e:catenarydegree}
Consider the numerical semigroup $\Gamma = \<11,36,39\>$.  The left-hand picture in Figure~\ref{fig:catenarydegree} depicts $\mathsf Z(450)$ and all pairwise distances.  Any two factorizations of 450 are connected by a 16-chain; one such chain between $z = (6,2,8)$ and $w = (24,3,2)$ is depicted with bold red edges.  Every 16-chain between $z$ and $w$ contains the bottom edge, so $\mathsf c(450) = 16$.  

The catenary degree can also be computed by examining edges with weight at most 16, as depicted in the right-hand picture of Figure~\ref{fig:catenarydegree}.  Removing edges labeled 8 and 12 yields a (minimal total weight) spanning tree, at which point Lemma~\ref{l:spanningtreecatenary} implies $\mathsf c(450) = 16$.  
\end{example}

\begin{figure}
\includegraphics[width=2.5in]{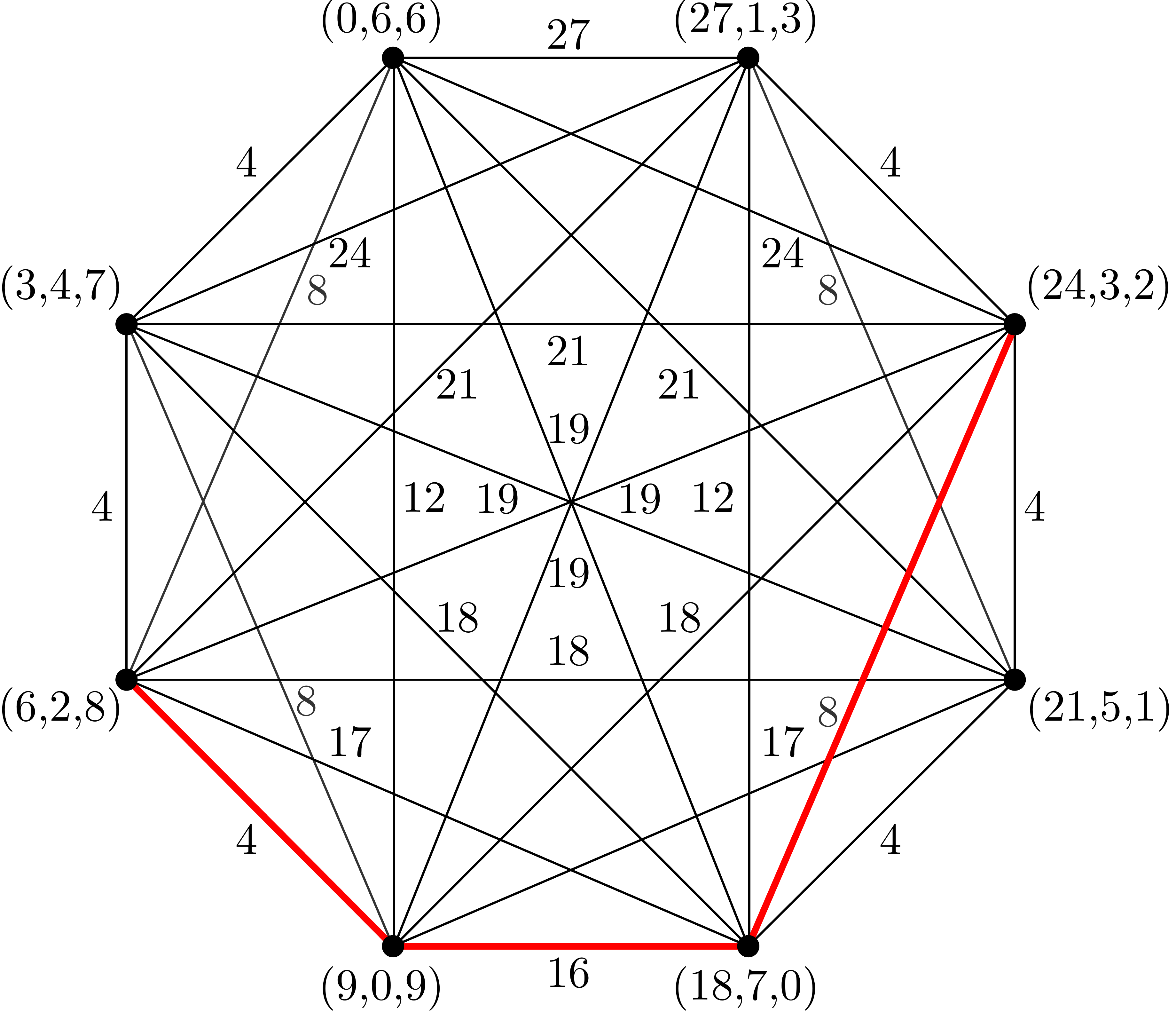}
\hspace{0.5in}
\includegraphics[width=2.5in]{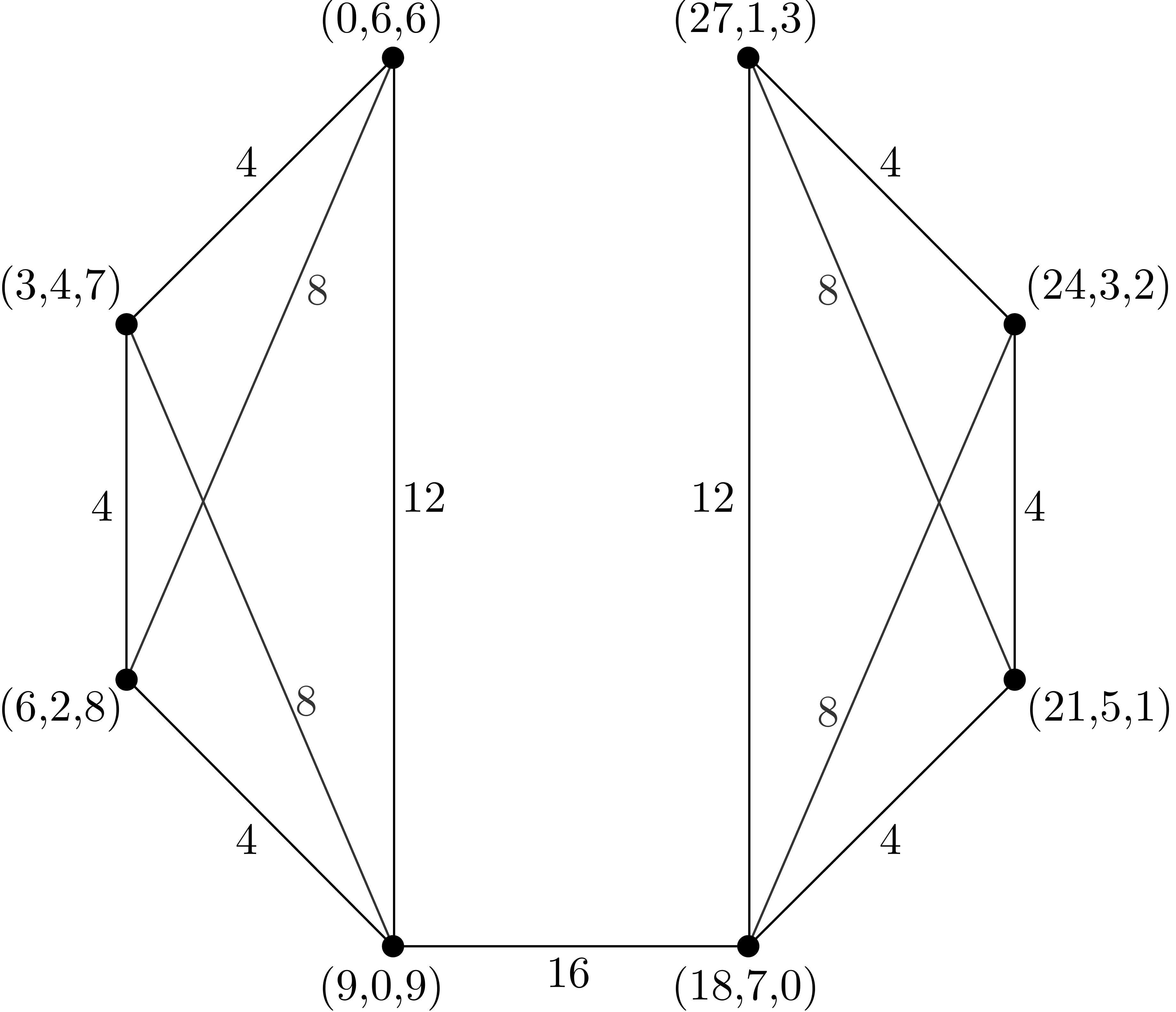}
\medskip
\caption{The catenary graph $K_{450}$ in $\Gamma = \<11,36,39\>$ from Example~\ref{e:catenarydegree}.  Both graphics were created using the computer algebra system \texttt{SAGE} \cite{sage}.}  
\label{fig:catenarydegree}
\end{figure}

The graph theoretic description of the catenary degree evident in Example~\ref{e:catenarydegree} is one of the core structures used in Algorithm~\ref{a:dynamiccatenary}.  We now make this definition explicit.  

\begin{defn}\label{d:catenarygraph}
% Let $e_i$ denote the $i$-th standard basis vector in $\NN^k$.  
Given $\gamma \in \Gamma$, the \emph{catenary graph} of $\gamma$ is the complete graph $K_\gamma$ with $V(K_\gamma) = \mathsf Z(\gamma)$ whose edges are labeled by the distance function $\dist(-,-)$.  
\end{defn}

To compute the catenary degree of an element $\gamma \in \Gamma$, one typically first computes $\mathsf Z(\gamma)$, and compiles a list of edges in $K_\gamma$ sorted by distance.  From this list of edges, a minimal connected graph is constructed one edge at a time, starting with the lowest weight (one can use Kruskal's Algorithm \cite{kruskal} for this).  The final edge weight equals the catenary degree.  One of the primary bottlenecks in this procedure is listing and sorting the edges of $K_\gamma$, since for large $\gamma$ most edges have weight higher than the catenary degree and thus are never used.  
% Asymptotically, $|\mathsf Z(\gamma)| \approx |\gamma|^{k-1}$ \cite{factorasymp}, so $K_\gamma$ has $|\gamma|^{2k-2}$ edges (which must be sorted).  

The main goal of Algorithm~\ref{a:dynamiccatenary} is to reduce the number of edges by remembering which edges were redundant for smaller elements.  This is achieved by storing, for each $\gamma \in \Gamma$, a spanning tree of the graph $K_\gamma$ with minimum weight (Definition~\ref{d:minimalweight}), from which the catenary degree can be easily recovered (Lemma~\ref{l:spanningtreecatenary}).  Indeed, the number of edges of $K_\gamma$ is quadratic in $|V(K_\gamma)|$, whereas the number of edges of a spanning tree is linear in $|V(K_\gamma)|$.  

\begin{defn}\label{d:minimalweight}
Let $G$ be an undirected graph with integer edge labels.  We say a chain $q = v_0, v_1, \ldots, v_r \in V(G)$ has \emph{minimum weight} if $q$ has minimum weight among all chains in $G$ connecting $v_0$ and $v_r$.  A spanning tree $T$ of $G$ has \emph{minimum weight} if the unique path between any two vertices $v_0$ and $v_r$ in $T$ is a minimum weight path in $G$.  
\end{defn}

\begin{lemma}\label{l:spanningtreecatenary}
If $T$ is a minimum weight spanning tree of the catenary graph of $\gamma \in \Gamma$, then $\mathsf c(\gamma)$ equals the highest edge weight in $T$.  
\end{lemma}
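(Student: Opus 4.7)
The plan is to prove the two inequalities $\mathsf c(\gamma) \le W$ and $\mathsf c(\gamma) \ge W$, where $W$ denotes the highest edge weight in $T$. Throughout, I will repeatedly use the fact that a chain $z_0, \ldots, z_r$ in $K_\gamma$ has weight equal to $\max_i \dist(z_{i-1}, z_i)$, so an $N$-chain is precisely a path in $K_\gamma$ whose maximum edge label is at most $N$.

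For the upper bound, I would argue directly from the spanning property. Given any two factorizations $z, w \in \mathsf Z(\gamma) = V(K_\gamma)$, the tree $T$ supplies a unique path from $z$ to $w$. Every edge along this path has weight at most $W$ by choice of $W$, so the path is a $W$-chain from $z$ to $w$ in $K_\gamma$. Since $z, w$ were arbitrary, this shows $\mathsf c(\gamma) \le W$.

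For the lower bound, I would exhibit a pair of factorizations that cannot be connected by an $(W-1)$-chain. Let $e = \{u, v\}$ be an edge of $T$ realizing the maximum weight $W$. The unique path in $T$ from $u$ to $v$ is simply the single edge $e$, which has weight $W$. By the minimum weight hypothesis on $T$ (Definition~\ref{d:minimalweight}), this single-edge path is a minimum weight path in $K_\gamma$ between $u$ and $v$. Consequently, every chain in $K_\gamma$ from $u$ to $v$ has weight at least $W$, which forces any $N$-chain from $u$ to $v$ to satisfy $N \ge W$. Therefore $\mathsf c(\gamma) \ge W$, combining with the previous paragraph to yield $\mathsf c(\gamma) = W$.

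There is no substantive obstacle in this argument: both directions are immediate once the equivalence between $N$-chains and paths of maximum edge weight at most $N$ is made explicit. The only subtle point to highlight is that the hypothesis on $T$ is genuinely used in the lower bound — an arbitrary spanning tree need not realize the minimum weight path between its endpoints, so without the defining property of a minimum weight spanning tree, the inequality $\mathsf c(\gamma) \ge W$ could fail.
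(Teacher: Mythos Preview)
Your argument is correct and is exactly the unraveling of definitions that the paper alludes to; the paper simply records the proof as ``Immediate upon unraveling definitions,'' and your two inequalities make that immediacy explicit.
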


\begin{proof}
Immediate upon unraveling definitions.  
\end{proof}

The term ``minimum weight spanning tree'' usually refers to a spanning tree that minimizes edge weight sums.  Proposition~\ref{p:minspanningtree} verifies that our definition is equivalent to the usual one.  

\begin{prop}\label{p:minspanningtree}
Fix a graph $G$, a spanning tree $T \subset G$ with $E(T) = \{e_1, \ldots, e_r\}$, and a weight function $w$ on the edges of $G$. If~$w(e_1) + \cdots + w(e_r)$ is minimal among all spanning trees of $G$, then $T$ is a minimum weight spanning tree of $G$.  
\end{prop}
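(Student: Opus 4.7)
The plan is to proceed by contradiction and use a standard ``edge swap'' argument. Assume $T$ minimises the total edge-weight sum among spanning trees of $G$, but that $T$ is \emph{not} a minimum weight spanning tree in the sense of Definition~\ref{d:minimalweight}. Then there exist vertices $u, v \in V(G)$ and a path $P$ in $G$ from $u$ to $v$ whose largest edge weight is strictly less than the largest edge weight on the unique path $Q$ from $u$ to $v$ in $T$. Let $e \in E(Q)$ be an edge realising this maximum on $Q$, so that every edge of $P$ has weight strictly less than $w(e)$.

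Next, I would exploit the tree structure. Deleting $e$ from $T$ disconnects $T$ into two components $T_u$ and $T_v$, with $u \in V(T_u)$ and $v \in V(T_v)$ (since $e$ lies on $Q$). The path $P$ in $G$ begins in $T_u$ and ends in $T_v$, so at least one edge $e'$ of $P$ has one endpoint in $T_u$ and the other in $T_v$. Then $T' = (T \setminus \{e\}) \cup \{e'\}$ is again a spanning tree of $G$: it has $|V(G)| - 1$ edges and reconnects the two components. Since $w(e') < w(e)$ by choice of $P$, the total weight of $T'$ is strictly less than that of $T$, contradicting the minimality hypothesis on $T$.

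The main step is the swap, and the only subtle point is making sure the swapped edge $e'$ genuinely crosses the cut defined by removing $e$ from $T$; this follows because $P$ connects the two sides of the cut, forcing some edge of $P$ to cross. Everything else — that $T'$ is a spanning tree and that its total weight strictly decreases — is routine once $e'$ is identified. I do not expect any serious obstacle beyond writing the contradiction cleanly.
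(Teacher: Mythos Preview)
Your proof is correct and follows essentially the same edge-swap contradiction as the paper: assume the tree path $Q$ has larger bottleneck than some path $P$, remove the heaviest edge $e$ of $Q$, and swap in a lighter edge of $P$ crossing the resulting cut to obtain a spanning tree of strictly smaller total weight. The paper locates the swap edge by observing that $e$ lies on the tree path $q_m$ between two consecutive vertices $v_{m-1},v_m$ of $P$ and swaps in the edge $(v_{m-1},v_m)$, but this is just an alternative phrasing of your cut-crossing argument.
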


\begin{proof}
Let $p$ denote a minimum weight chain $v_0, v_1, \ldots, v_r \in V(G)$, and let $q$ denote the unique path in $T$ from $v_0$ to $v_r$.  For $i \le r$, let $q_i$ denote the unique path in $T$ connecting $v_{i-1}$ and $v_i$.  Concatenating the chains $q_1, q_2, \ldots, q_r$ produces a chain $q'$ in $T$ connecting $v_0$ and $v_r$.  Since $q$ is the unique path in $T$ connecting $v_0$ and $v_r$, every edge in $q$ must also be in $q'$.  In particular, the edge $e$ of highest weight in $q$ must lie in one of the chains $q_m$.  As such, if $w(p) < w(q)$, then $w(v_{m-1}, v_m) \leq w(p) < w(q) = w(e)$, so deleting $e$ from $T$ and adding $(v_{m-1}, v_m)$ produces a spanning tree of lesser weight.  This completes the proof.  
\end{proof}

Now that we have identified minimum weight spanning trees as sufficient for computing the catenary degree, it remains to see how a minimum weight spanning tree for $K_\gamma$ can be constructed from minimum weight spanning trees of its divisors.  We accomplish this task using cover morphisms (Definition~\ref{d:covermorphism}), which played a crucial role in the dynamic algorithms presented in \cite{dynamicalg}.  The key observation is that cover morphisms also preserve edge weights when applied to adjacent factorizations in $K_{\gamma - \alpha_i}$ (Remark~\ref{r:covermorphism}).  

\begin{defn}\label{d:covermorphism}
A \emph{cover morphism} is a map $\psi_i:K_{\gamma - \alpha_i} \to K_\gamma$ that acts on the vertex set $V(K_{\gamma - \alpha_i}) = \mathsf Z(\gamma - \alpha_i)$ by incrementing the $i$-th coordinate of each factorization.  
\end{defn}

\begin{remark}\label{r:covermorphism}
When applied to a pair $(z,w)$ of factorizations in $K_{\gamma - \alpha_i}$, the cover morphism $\psi_i$ applies the same translation to both $z$ and $w$, so the resulting factorizations $(z',w')$ satisfy 
\[
z' - \gcd(z',w') = z - \gcd(z,w) \quad \text{ and } \quad w' - \gcd(z',w') = w - \gcd(z,w).  
\]
In particular, $\dist(z',w') = \dist(z,w)$, so $\psi_i$ preserves edge weights (stated another way, $\psi_i$ defines an isometric embedding of $K_{\gamma - \alpha_i}$ into $K_\gamma$).  Additionally, an edge $(z',w') \in E(K_\gamma)$ lies in the image of $\psi_i$ precisely when $i \in \supp(z') \cap \supp(w')$.  Here, $\supp(z)$ denotes the \emph{support} of $z$, that is, the set of indices of the nonzero coordinates of $z$.
\end{remark}

As a final step, Theorem~\ref{t:spanningtree} implies that in addition to edges arising from the minimum weight spanning trees of $K_{\gamma - \alpha_1}, \ldots, K_{\gamma - \alpha_k}$, some supplementary edges from the Graver basis of $\Gamma$ are needed to obtain a minimum weight spanning tree for $K_\gamma$.  The edges from the Graver basis of $\Gamma$ can be thought of as a base case for the dynamic process, in the sense that every edge used in a minimum weight spanning tree is a translation of a Graver basis element.  Although the Graver basis of $\Gamma$ is computationally expensive (albeit necessary; see~Example~\ref{e:catenarygraver}), it only needs to be calculated once for the monoid.  

\begin{example}\label{e:catenarygraver}
Although a minimal presentation for $\Gamma$ produces enough edges via translation to connect $K_\gamma$ for every $\gamma \in \Gamma$, these edges are not sufficient for computing catenary degrees.  For example, in the numerical semigroup $\Gamma = \<11, 36, 39\>$ from Example~\ref{e:catenarydegree}, the only edge $e = ((9,7,0), (0,0,9))$ in $K_{351}$ with weight $\mathsf c(351) = 16$ does not appear in any minimal presentation.  In fact, the weight 16 edge in Figure~\ref{fig:catenarydegree} is a translation of $e$.  It is this phenomenon that forces Algorithm~\ref{a:dynamiccatenary} to rely on a Graver basis for $\Gamma$.  
\end{example}

\begin{thm}\label{t:spanningtree}
For each $i \le k$, let $T_i \subset K_\gamma$ denote the image in $K_\gamma$ of a minimum weight spanning tree of $K_{\gamma - \alpha_i}$ under the cover morphism $\psi_i:K_{\gamma - \alpha_i} \to K_\gamma$.  
% such that $\gamma - \alpha _i \in \Gamma$, let $Z_i = \{w + e_i : f \in \mathsf Z(\gamma - \alpha_i)\}\subset\mathsf{Z}(a)$.  Then, let $G_i=(Z_i, E_i)$ be a complete subgraph of $G$, with minimum spanning tree $T_i = \left(Z_i, E_i'\right)$.  
Additionally, let
\[
E' = \mathcal H(\{(z, w): z_1\alpha_1 + \cdots + z_k\alpha_k = w_1\alpha_1 + \cdots + w_k\alpha_k\}) \cap E(K_\gamma)
\]
be the elements of the Graver basis of $\Gamma$ that lie in $\mathsf Z(\gamma) \times \mathsf Z(\gamma)$.  
The graph 
\[
G' = (\mathsf Z(\gamma), E(T_1) \cup \cdots \cup E(T_k) \cup E')
\]
is connected, and any minimum weight spanning tree of $G'$ is minimum weight in $K_\gamma$.  
\end{thm}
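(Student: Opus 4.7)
The plan is to reduce both assertions---connectedness of $G'$ and the coincidence of minimum-weight spanning trees---to a single path-lifting lemma: for every edge $(z,w) \in E(K_\gamma)$, there is a path from $z$ to $w$ in $G'$ whose edges all have weight at most $\dist(z,w)$. Granting this, connectedness of $G'$ follows immediately since $K_\gamma$ is complete. For the minimum-weight statement, any path in $K_\gamma$ from $u$ to $v$ can be expanded edge-by-edge into a path in $G'$ without increasing its maximum edge weight, so the minimum max-edge-weight over $G'$-paths from $u$ to $v$ equals that over $K_\gamma$-paths; thus any minimum-weight spanning tree of $G'$ (Definition~\ref{d:minimalweight}) is automatically minimum-weight in $K_\gamma$.

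The path-lifting lemma splits by whether $\gcd(z,w)$ vanishes. If $\gcd(z,w) \ne 0$, I pick $i \in \supp(z) \cap \supp(w)$; subtracting one copy of $\alpha_i$ from each of $z$ and $w$ yields a preimage edge $(z',w') \in E(K_{\gamma - \alpha_i})$ satisfying $\dist(z',w') = \dist(z,w)$ by Remark~\ref{r:covermorphism}. The unique path between $z'$ and $w'$ in the given minimum-weight spanning tree of $K_{\gamma - \alpha_i}$ then has maximum edge weight at most $\dist(z,w)$, and lifting this path via $\psi_i$---which preserves weights---produces the desired path inside $T_i \subset G'$.

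The subtle case is $\gcd(z,w) = 0$, where $z$ and $w$ have disjoint supports. Here I will invoke the standard conformal decomposition property of the Graver basis to write $z - w = \sum_{j=1}^{r} (g_j^+ - g_j^-)$, where each $(g_j^+, g_j^-)$ is a Graver basis element, $\sum_j g_j^+ = z$, and $\sum_j g_j^- = w$. Setting $z^{(0)} = z$ and $z^{(j)} = z^{(j-1)} - g_j^+ + g_j^-$ produces a sequence in $\mathsf Z(\gamma)$ ending at $w$. A short calculation shows $\gcd(z^{(j-1)}, z^{(j)}) = z^{(j-1)} - g_j^+$, so for each successive edge either this gcd is nonzero---and the previous case applies to the edge $(z^{(j-1)}, z^{(j)})$---or $(z^{(j-1)}, z^{(j)}) = (g_j^+, g_j^-)$ lies literally in the Graver basis, and hence in $E'$ since both coordinates then represent $\gamma$. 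The weight bound $\dist(z^{(j-1)}, z^{(j)}) = \max(|g_j^+|, |g_j^-|) \le \max(|z|, |w|) = \dist(z,w)$ propagates through the concatenation. This second case is the main obstacle, and it is precisely why the Graver basis---rather than a smaller object like a minimal presentation---is indispensable in the hypothesis; Example~\ref{e:catenarygraver} shows that this cannot be weakened.
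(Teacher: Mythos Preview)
Your proof is correct and follows essentially the same approach as the paper: handle the case $\supp(z)\cap\supp(w)\ne\emptyset$ via the image tree $T_i$, and the disjoint-support case by decomposing $(z,w)$ as a sum of Graver basis elements to obtain a chain whose successive edges reduce to the first case (or lie directly in $E'$), with the weight bound $\max(|g_j^+|,|g_j^-|)\le\max(|z|,|w|)$. Your packaging of both assertions into a single path-lifting lemma is slightly more streamlined than the paper's separate treatment of connectedness and minimum weight, but the underlying argument is identical.
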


\begin{proof}
Fix $z, w \in \mathsf Z(\gamma)$.  If $j \in \supp(z) \cap \supp(w)$, then a path in $T_j$ connects $z$ and $w$.  
% Similarly, if $z$ and $w$ have disjoint support but are connected by a chain $z = z_0, \ldots, z_r = w$ with $\supp(z_{i-1}) \cap \supp(z_i) \ne \emptyset$ for each $i \le r$, then $z_{i-1}$ and $z_i$ are connected by a chain in $G'$ for each $i \le r$, and appending their respective chains yields a chain connecting $z$ and $w$.  
Alternatively, if $z$ and $w$ have disjoint support and $(z,w) \notin E'$, then $(z,w)$ can be expressed as a sum
$(z,w) = (z_1,z_1') + \cdots + (z_r,z_r')$
of Graver basis elements, so the chain $q$ given by
\[
z_1 + z_2 + \cdots + z_r, \quad z_1' + z_2 + \cdots + z_r, \quad \ldots, \quad z_1' + z_2' + \cdots + z_r'
\]
connects $z$ and $w$ in $K_\gamma$.  Since each edge in $q$ connects factorizations with common support, $z$ and $w$ are connected in $G'$.  

Next, let $T'$ be any minimum weight spanning tree of $G'$. %, and let $T$ be any spanning tree of $K_\gamma$.  
It suffices to prove that the~unique~path $q'$ in $T'$ connecting $(z, w) \in E(K_\gamma) \setminus E(T')$ has weight at most $\dist(z,w)$.  If $j \in \supp(z) \cap \supp(w)$, 
% (or if $z$ and $w$ are connected by a chain of edges with this property) 
then the unique path from $z$ to $w$ in $T_j$ has weight at most $\dist(z,w)$, and thus so does $q'$.  On the other hand, if $z$ and $w$ have disjoint support, then the $i$-th edge in the chain $q$ constructed in the above paragraph has weight
\[
\dist(z_i,z_i') = \max(|z_i|,|z_i'|) \le \max(|z|,|w|) = \dist(z,w),
\]
meaning $q'$ has weight at most $\max \{\dist(z_i, z_i') : i \le r\} \le \dist(z,w)$.  This completes the proof.  
\end{proof}

We are now ready to state Algorithm~\ref{a:dynamiccatenary}.  

\begin{alg}\label{a:dynamiccatenary}
Finds $\mathsf c(\gamma)$ for any affine semigroup element $\gamma \in \Gamma = \<\alpha_1, \ldots, \alpha_k\> \subset \NN^d$.  
\begin{algorithmic}
\Function{CatenaryDegreeOfAffineSemigroupElement}{$\gamma,\Gamma$}
\State $T \gets $ \Call{MinimumWeightSpanningTree}{$\gamma,\Gamma$}
\State \Return $\max\{\dist(z,w) : (z,w) \in E(T)\}$.
\EndFunction
\State
\State $H \gets \mathcal H(\{(z, w) : z_1\alpha_1 + \cdots + z_k\alpha_k = w_1\alpha_1 + \cdots + w_k\alpha_k\})$
\Function{MinimumWeightSpanningTree}{$\gamma, \Gamma$}
\If{$T_\gamma$ not already computed}
	\If{$|\mathsf Z(\gamma)| \le 1$}
		\State $T_\gamma \gets (\mathsf Z(\gamma), \emptyset)$
	\Else
		\State $E' \gets H \cap E(K_\gamma)$
		\ForAll{$j = 1, \ldots, k$}
			\State $T_i \gets \psi_i(\Call{MinimumWeightSpanningTree}{\gamma - \alpha_i,\Gamma})$
		\EndFor
		\State $T_\gamma \gets \Call{Kruskal}{\mathsf Z(\gamma), E(T_1) \cup \cdots \cup E(T_k) \cup E'}$
	\EndIf
\EndIf
\State \Return $T_\gamma$
\EndFunction
\end{algorithmic}
\end{alg}

\begin{thm}\label{t:dynamiccatenary}
For any affine semigroup element $\gamma \in \Gamma$, Algorithm~\ref{a:dynamiccatenary} returns $\mathsf c(\gamma)$.  
\end{thm}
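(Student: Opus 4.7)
The plan is to prove by induction on $\gamma \in \Gamma$, with respect to the well-founded order $\gamma' \prec \gamma$ iff $\gamma - \gamma' \in \Gamma \setminus \{0\}$, that the subroutine \textsc{MinimumWeightSpanningTree} returns a minimum weight spanning tree $T_\gamma$ of the catenary graph $K_\gamma$. Once this is established, Lemma~\ref{l:spanningtreecatenary} immediately yields $\mathsf c(\gamma) = \max\{\dist(z,w) : (z,w) \in E(T_\gamma)\}$, which is exactly the value returned by the top-level function.

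For the base case, if $|\mathsf Z(\gamma)| \le 1$ the graph $K_\gamma$ has at most one vertex, so the empty-edge graph $(\mathsf Z(\gamma), \emptyset)$ is vacuously a minimum weight spanning tree. For the inductive step, assume $|\mathsf Z(\gamma)| \ge 2$ and that for each $i \le k$ the recursive call has returned a minimum weight spanning tree $T_{\gamma - \alpha_i}$ of $K_{\gamma - \alpha_i}$, with the convention that $\mathsf Z(\gamma - \alpha_i) = \emptyset$ and $T_{\gamma - \alpha_i}$ is the empty tree when $\gamma - \alpha_i \notin \Gamma$. By Remark~\ref{r:covermorphism}, the cover morphism $\psi_i$ is an isometric embedding of $K_{\gamma - \alpha_i}$ into $K_\gamma$ whose image is the complete subgraph on $\{z \in \mathsf Z(\gamma) : i \in \supp(z)\}$; hence $T_i = \psi_i(T_{\gamma - \alpha_i})$ is a minimum weight spanning tree of that subgraph.

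I would then invoke Theorem~\ref{t:spanningtree} applied to $G' = (\mathsf Z(\gamma),\, E(T_1) \cup \cdots \cup E(T_k) \cup E')$, where $E' = H \cap E(K_\gamma)$ as computed in the algorithm (noting $H$ is exactly the Graver basis of $\Gamma$ from Definition~\ref{d:graverbasis}). That theorem guarantees that $G'$ is connected and that every minimum weight spanning tree of $G'$ is also a minimum weight spanning tree of $K_\gamma$. Since \textsc{Kruskal} returns a spanning tree of $G'$ with minimal edge-weight sum, Proposition~\ref{p:minspanningtree} ensures $T_\gamma$ is minimum weight in $G'$, and hence in $K_\gamma$, closing the induction.

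The main obstacles are essentially packaged into the supporting results: Theorem~\ref{t:spanningtree} does the heavy lifting by certifying that cover-morphism translates of smaller-element spanning trees, supplemented by the Graver basis edges in $E'$, suffice to recover a minimum weight spanning tree of $K_\gamma$; and Proposition~\ref{p:minspanningtree} reconciles the edge-weight-sum notion of minimality used by Kruskal with the path-wise notion of Definition~\ref{d:minimalweight}. Termination is immediate from well-foundedness of the componentwise partial order on the finite set of $\Gamma$-predecessors of $\gamma$, and memoization of $T_\gamma$ only affects efficiency, not correctness.
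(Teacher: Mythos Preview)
Your proposal is correct and follows essentially the same approach as the paper: you invoke Theorem~\ref{t:spanningtree} and Proposition~\ref{p:minspanningtree} to show that \textsc{MinimumWeightSpanningTree} returns a minimum weight spanning tree of $K_\gamma$, and then apply Lemma~\ref{l:spanningtreecatenary}. The paper's proof is a two-line citation of these three results, whereas you have spelled out the underlying induction explicitly, but the logical structure is identical.
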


\begin{proof}
Proposition~\ref{p:minspanningtree} and Theorem~\ref{t:spanningtree} ensure that \texttt{MinimumWeightSpanningTree} returns a minimum weight spanning tree $T \subset K_\gamma$, and by Lemma~\ref{l:spanningtreecatenary}, the maximum weight among the edges of $T$ equals $\mathsf c(\gamma)$.  
\end{proof}

%%%%%%%%%%%%%%%%%%%%%%%%%%%%%%%%%%%%%%%%%%%%%%%%%%%%%%%%%%%%%%%%%%%%%%%%%
\subsection{Implementation and benchmarks}%%%%%%%%%%%%%%%%%%%%%%%%%%%%%%%
%raggedbottom%%%%%%%%%%%%%%%%%%%%%%%%%%%%%%%%%%%%%%%%%%%%%%%%%%%%%%%%%%%%

Algorithm~\ref{a:dynamiccatenary} has now been implemented as \texttt{CatenaryDegreeElementListWRTNumericalSemigroup} in the \texttt{numericalsgps} \cite{numericalsgps} \texttt{GAP} \cite{gap} package.  The function definition mirrors that of the dynamic length sets and $\omega$-primality functions \cite{dynamicalg}, which motivated the results in this section.  A version for affine semigroups is forthcoming, pending discussion of the function definition.  
% (Actually several depending on the packages installed by the user).  
The code is accessible in \cite{numericalsgps} or from the official \texttt{GAP} web page \url{http://www.gap-system.org}. The development version of \texttt{numericalsgps} can be found in \url{https://bitbucket.org/gap-system/numericalsgps}. 

We give a few implementation notes before concluding the section with an example.  

\begin{enumerate}
\item 
If $\Gamma = \<\alpha_1, \ldots, \alpha_k\> \subset \NN$ is a numerical semigroup, then it is natural to begin with $\gamma = 0$ and compute $\mathsf c(\gamma)$ in order.  This allows the spanning trees to be stored in a ring buffer of size $\alpha_k$ (the largest generator), since $\gamma - \alpha_k$ is the smallest element that needs to be recalled to compute $\mathsf c(\gamma)$.  

\item 
By storing the list of edges in $E(T_\gamma)$ ordered by weight and computing the union $E(T_1) \cup \cdots \cup E(T_k) \cup E'$ via a merge of sorted lists, the first (and most computationally-intensive) step in Kruskal's Algorithm runs in linear time.  

\item 
At no point must $\mathsf Z(\gamma)$ be explicitly computed.  The base case $\mathsf Z(\gamma) \le 1$ consists of precisely those elements for which $E(T_1)$, \ldots, $E(T_k),$ and $E'$ are all empty, and in all other cases, $\mathsf Z(\gamma)$ is simply the set of edge endpoints by Theorem~\ref{t:spanningtree}.  This is essentially~\cite[Algorithm~3.3]{dynamicalg} for dynamically computing sets of factorizations.  

\end{enumerate}

\begin{example}\label{e:catenarynum}
Given here are runtimes for three separate methods of computing $\mathsf c(\gamma)$ in $\Gamma = \<11,23,27,31,43\> \subset \NN$ for $\gamma \le 500$.  

\begin{verbatim}
gap> s:=NumericalSemigroup(11,23,27,31,43);;
gap> c:=CatenaryDegreeOfNumericalSemigroup(s);;
gap> l:=Intersection([1..500], s);;
gap> lc:=List(l, x->CatenaryDegreeOfElementInNumericalSemigroup(x,s));; time;
270390
gap> ln:=List(l, x->CatenaryDegreeOfElement(x,s,c));; time;
29380
gap> ld:=CatenaryDegreeElementListWRTNumericalSemigroup(l,s);; time;
6630
\end{verbatim}

The function \texttt{CatenaryDegreeOfElementInNumericalSemigroup} (original implementation in the package) computes $\mathsf c(\gamma)$ by computing the complete graph $K_\gamma$ and then removing edges by weight, whereas the function \texttt{CatenaryDegreeOfElement} constructs a spanning tree for $K_\gamma$ using Kruskal's Algorithm (as discussed immediately following Definition~\ref{d:catenarygraph}).  The latter will replace the implementation of the former in the next release of the \texttt{numericalsgps} package.  Although \texttt{CatenaryDegreeOfElement} is an improvement over the original implementation, the dynamic algorithm is still considerably faster for large $\gamma$.  
\end{example}

%%%%%%%%%%%%%%%%%%%%%%%%%%%%%%%%%%%%%%%%%%%%%%%%%%%%%%%%%%%%%%%%%%%%%%%%%
\section{The tame degree}%%%%%%%%%%%%%%%%%%%%%%%%%%%%%%%%%%%%%%%%%%%%%%%%
\label{sec:tamedegree}%%%%%%%%%%%%%%%%%%%%%%%%%%%%%%%%%%%%%%%%%%%%%%%%%%%
%raggedbottom%%%%%%%%%%%%%%%%%%%%%%%%%%%%%%%%%%%%%%%%%%%%%%%%%%%%%%%%%%%%

A variant of the catenary degree, the tame degree also uses distance (as in Definition~\ref{d:catenarydegree}) to measure how wild the factorizations of monoid elements are~ \cite{g-hk}.  In this section, we present Algorithm~\ref{a:affinetame}, an improved method for computing the tame degree of a full affine semigroup (Definition~\ref{d:full}).  We begin by presenting Theorem~\ref{t:tameprincipal}, the main new theoretical result used in Algorithm~\ref{a:affinetame}, followed by a discussion of the algorithm.  One of the primary motivations for having a dedicated algorithm for this family of affine semigroups is to examine the tame degree of block monoids; in Examples~\ref{e:tameb2x2x2}-\ref{e:tameb2x2x2x2}, we apply our algorithm in this setting.  

\begin{defn}\label{d:tamedegree}
Fix $\Gamma = \<\alpha_1, \ldots, \alpha_k\> \subset \NN^d$.  Given $\gamma \in \Gamma$ and $i \le k$ such that $\gamma - \alpha_i \in \Gamma$, the \emph{tame degree} of $\gamma$ with respect to $\alpha_i$, denoted $\mathsf t_i(\gamma)$, is defined as the least $N \in \NN \cup \{\infty\}$ such that for every $z \in \mathsf Z(\gamma)$, there exists $w \in \mathsf Z(\gamma)$ with $w_i > 0$ and $\dist(z,w)\le N$.  
% Notice that the condition $\mathsf Z(m) \cap (x + \mathcal F) \neq \emptyset$ is equivalent to $\gamma \in \varphi(x) + \Gamma$.  
The \emph{tame degree} of $\Gamma$ with respect to $\alpha_i$ is then defined as 
\[
\mathsf t_i(\Gamma) = \sup\{\mathsf t_i(\gamma) : \gamma - \alpha_i \in \Gamma\}.
\]
% and $\Gamma$ is said to be \emph{locally tame} if $\mathsf t_i(\Gamma)$ is finite for all $i \le k$.  
Lastly, the \emph{tame degree} of $\Gamma$ is defined as $\mathsf t(\Gamma) = \max\{\mathsf t_i(\Gamma) \colon i \le k\}$.  
% , and we say that $\Gamma$ is \emph{tame} if its tame degree is finite.
\end{defn}

In what follows, we denote by `$\le$' the usual partial ordering in $\NN^k$, that is, $x \le y$ if $y - x \in \NN^k$, or, equivalently, if $x_i \le y_i$ for all $i \le k$.  Notice that $x\le y$ is also equivalent to $y\in x+\NN^k=\{x+z\colon z\in \NN^k\}$.
% We will simply write $\le$ for $\le_{\NN^k}$ when there is no possible misunderstanding.

\begin{thm}\label{t:tameprincipal}
Let $\Gamma = \<\alpha_1, \ldots, \alpha_k\> \subset \NN^d$, and fix $i \le k$.  We have 
\[
\mathsf t_i(\Gamma) = \sup\{\mathsf t_i(\gamma) : \gamma \in M_i\},
\]
where $M_i = \{\varphi_\Gamma(x) : x\in \mathrm{Minimals}_\le \mathsf Z(\alpha_i + \Gamma)\}$.
\end{thm}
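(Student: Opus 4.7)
The plan is to prove the two inequalities separately. The easy direction $\sup\{\mathsf t_i(\gamma) : \gamma \in M_i\} \le \mathsf t_i(\Gamma)$ is immediate from the definitions, since every element of $M_i$ lies in $\alpha_i + \Gamma = \{\gamma \in \Gamma : \gamma - \alpha_i \in \Gamma\}$. So the real content is showing that $\mathsf t_i(\gamma) \le \sup\{\mathsf t_i(\gamma') : \gamma' \in M_i\}$ for every $\gamma \in \alpha_i + \Gamma$.

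Fix such a $\gamma$ and call the supremum $N$. To verify that $N$ works as a witness for $\mathsf t_i(\gamma)$, I take an arbitrary $z \in \mathsf Z_\Gamma(\gamma) \subseteq \mathsf Z_\Gamma(\alpha_i + \Gamma)$ and produce a $w \in \mathsf Z_\Gamma(\gamma)$ with $w_i > 0$ and $\dist(z,w) \le N$. Since $\mathsf Z_\Gamma(\alpha_i + \Gamma)$ is a subset of $\NN^k$, Dickson's lemma guarantees an element $x \in \mathrm{Minimals}_\le \mathsf Z_\Gamma(\alpha_i + \Gamma)$ with $x \le z$; write $y = z - x \in \NN^k$ and set $\gamma' = \varphi_\Gamma(x) \in M_i$. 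Since $\gamma' \in \alpha_i + \Gamma$, the definition of $\mathsf t_i(\gamma') \le N$ yields a factorization $w' \in \mathsf Z_\Gamma(\gamma')$ with $w'_i > 0$ and $\dist(x,w') \le N$. Taking $w = w' + y$, I get $\varphi_\Gamma(w) = \gamma' + \varphi_\Gamma(y) = \gamma$ and $w_i \ge w'_i > 0$, as desired.

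The remaining step is the translation-invariance of $\dist$, which I view as the main (albeit elementary) technical point: from $\min(x_j + y_j, w'_j + y_j) = y_j + \min(x_j, w'_j)$ componentwise, one obtains $\gcd(z,w) = y + \gcd(x,w')$, hence $z - \gcd(z,w) = x - \gcd(x,w')$ and $w - \gcd(z,w) = w' - \gcd(x,w')$. Taking lengths gives $\dist(z,w) = \dist(x,w') \le N$, completing the proof that $\mathsf t_i(\gamma) \le N$ and therefore $\mathsf t_i(\Gamma) \le N$.

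The hardest conceptual point is recognizing that it suffices to replace $\alpha_i + \Gamma$ with the image of the (necessarily finite) set of minimal factorizations of its elements: any factorization $z$ over $\alpha_i + \Gamma$ dominates a minimal one $x$ by a translation $y$, and the distance function is unaffected by such translations, so the full-length problem reduces to the minimal case. This also explains why the theorem is genuinely useful for Algorithm~\ref{a:affinetame}: Dickson's lemma makes $M_i$ finite, so the supremum is in fact a maximum over finitely many elements that one can enumerate (for instance, via a Hilbert basis computation for the appropriate system), reducing the computation of $\mathsf t_i(\Gamma)$ to finitely many instances of $\mathsf t_i(\gamma)$.
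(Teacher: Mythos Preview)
Your proof is correct and follows essentially the same approach as the paper's: reduce an arbitrary factorization $z$ of $\gamma \in \alpha_i + \Gamma$ to a minimal $x \le z$ in $\mathsf Z(\alpha_i + \Gamma)$, apply the tame degree at $\gamma' = \varphi_\Gamma(x)$ to get $w'$, and translate back by $z - x$, using translation-invariance of $\dist$. The paper separates off the trivial case $z_i > 0$ (taking $w = z$), whereas your uniform argument absorbs it; and you spell out the translation-invariance of $\dist$ that the paper merely asserts.
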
 

\begin{proof}
Since $M_i \subseteq \Gamma$, clearly $\sup\{\mathsf t_i(m) : m \in M_i\} \le \mathsf t(\Gamma)$.  Conversely, fix $\gamma \in \alpha_i + \Gamma$ and $z \in \mathsf Z(\gamma)$.  If $z_i > 0$, then choose $w = z$ in the definition of tame degree. Otherwise, $z \in \mathsf Z(\alpha_i + \Gamma)$ since $\gamma \in \alpha_i + \Gamma$, and consequently, there exists $z' \in \mathrm{Minimals}_\le \mathsf Z(\alpha_i + \Gamma)$ such that $z' \le z$.  Set $\gamma' = \varphi_\Gamma(z')$.  By the definition of $\mathsf t_i(-)$, there exists $w' \in \mathsf Z(\gamma') \cap (\alpha_i + \NN^k)$ with $\dist(z',w') \le \mathsf t_i(\gamma')$. Let $w = w' + (z - z')$.  Upon verifying that 
\[
\varphi_\Gamma(w) = \varphi_\Gamma(w') + \varphi_\Gamma(z - z') = \varphi_\Gamma(z') + \varphi_\Gamma(z - z') = \varphi_\Gamma(z) = \gamma,
\]
$\dist(z,w) = \dist(z',w') \le \mathsf t_i(\gamma')$, and $w \in \mathsf Z(\gamma) \cap (\alpha_i + \NN^k)$, it follows that $\mathsf t_i(n) \le \mathsf t_i(m)$.  This implies $\mathsf t_i(\gamma) \le \sup\{\mathsf t_i(\gamma') : \gamma' \in M_i\}$, and consequently $\mathsf t_i(\Gamma) \le \sup\{\mathsf t_i(\gamma') : \gamma' \in M_i\}$.
\end{proof}

\begin{remark}\label{r:tameprincipal}
Theorem~\ref{t:tameprincipal} holds in more generality with a nearly identical proof; the only requirement is that $\Gamma$ be atomic.  We specialize to the affine setting here to simplify notation since Algorithm~\ref{a:affinetame} requires this assumption; see~\cite{g-hk} for the general definitions.  
\end{remark}

\begin{remark}\label{r:tameomega}
Comparing Theorem~\ref{t:tameprincipal} to \cite[Proposition~3.3]{omega}, it is not a coincidence that the elements needed to compute the tame degree of $\Gamma$ are the same as those needed to compute the $\omega$-primality invariant (see~\cite[Definition~3.1]{omega}).  Some evidence of this connection was already observed for numerical semigroups; see, for instance, \cite[Corollary~3]{c-t-ns} and \cite[Remark~5.9]{omega}. 
\end{remark}

Arranging the atoms $\alpha_1, \ldots, \alpha_k$ of $\Gamma$ as the columns of a matrix $A$, the set $\mathsf Z(\gamma)$ of factorizations of $\gamma \in \Gamma$ coincides with the nonnegative integer solutions to the linear system
\[
Ax = \gamma.
\]
% (and some modular equations if $G \ne 0$) 
% If two solutions $x$ and $y$ are comparable in $\NN^k$ (that is, either $y - x \in \NN^k$ or $x - y \in \NN^k$), then $A(y - x) = A(x - y) = 0$ and according to the above paragraph, $x - y$ must be zero.  In light of Dickson's lemma, this means that $\mathsf Z(\gamma)$ has finitely many elements.  
Prior to Theorem~\ref{t:tameprincipal}, the tame degree of $\Gamma$ was typically computed by first computing the Graver basis of $A$ \cite{c-t}.  We propose here an alternative method, instead computing the sets 
\[
\mathcal M_i = \mathrm{Minimals}_\le \mathsf Z(\alpha_i + \Gamma) \quad \text{ and } \quad M_i = \{\varphi_\Gamma(z)\colon z\in \mathcal M_i\}
\]
for each $i\in\{1,\ldots,k\}$.  By Dickson's lemma, both $\mathcal M_i$ and $M_i$ have finitely many elements.

For the calculation of $\mathcal M_i$ one can use \cite[Algorithm 16]{minimales} (this is precisely how $\omega$-primality is computed in \cite{GGMT}).  However, it turns out that \texttt{Normaliz}~\cite{normaliz2, normaliz} performs this task faster.  To obtain $\mathcal M_i$, we first compute the minimal nonnegative integer solutions of 
\[ (A\mid -A)\begin{pmatrix} x\\ y \end{pmatrix} = \alpha_i,
\]
and then project on the first $k$ coordinates (here, $(A\mid -A)$ is the matrix having as columns the columns of $A$ followed by those of $-A$).  From the resulting (finite) set, we simply take those elements that are minimal with respect to $\le$.  

Since the Graver basis of $A$ coincides with the minimal nonnegative integer solutions of 
\[
(A\mid -A)\begin{pmatrix} x\\ y \end{pmatrix} = 0,
\] 
it would seem that our approach has no significant computational advantage over the procedure given in \cite{c-t}.  However, combining with \cite[Corollary~3.5]{omega} (stated here as Proposition~\ref{p:principalsaturated}) yields significant performance improvements for full affine semigroups.  
% Actually \cite[Corollary~3.5]{omega} can be stated in a more general setting, and this is what we do next (Proposition~\ref{p:principalsaturated}). 
% First, we introduce the concept of saturated monoid, which generalizes that of full affine semigroup.

%A particular case of finitely generated saturated monoids are full affine semigroups. 
\begin{defn}\label{d:full}
An affine semigroup $\Gamma \subset \NN^d$ is \emph{full} if $\mathrm G(\Gamma)\cap\NN^d=\Gamma$, where $\mathrm G(\Gamma)$ denotes the subgroup of $\mathbb Z^d$ generated by $\Gamma$.
%it is saturated in $\NN^d$, that is, whenever $n\gamma \in \Gamma$ for $\gamma \in \NN^d$ and some positive integer $n$, we have $\gamma \in \Gamma$.  
\end{defn}

\begin{remark}\label{r:full}
Up to isomorphism, every full affine semigroup is a (reduced finitely generated) Krull monoid \cite[Proposition 2]{k-l}.  Also, the class of reduced Krull monoids coincides with the class of saturated submonoids of free monoids \cite[Theorem 2.4.8]{g-hk}.
\end{remark}

% \begin{defn}\label{d:saturated}
% Fix a submonoid $\Gamma$ of a free monoid $F$.  The \emph{quotient group} of $\Gamma$ in $F$ is the smallest subgroup of $G$ containing $\Gamma$.  
% A submonoid $\Gamma$ a free monoid $F$ is \emph{saturated} if $\mathrm G(\Gamma) \cap F = \Gamma$.  
% %This class of monoids coincides with the class of reduced finitely generated Krull monoids (\cite[Theorem 2.4.8]{g-hk}).
% \end{defn}

% Since we are assuming the $\Gamma$ is cancellative, then we can consider $\mathrm G(\Gamma)$, the quotient group of $\Gamma$. This group can be constructed as $(\Gamma\times \Gamma)/\sim$, where $(m,n)\sim (m',n')$ if $m+n'=n+m'$. The opposite of an element $[(m,n)]$ is $[(n,m)]$. The map $m\mapsto [(m,0)]$ becomes a monomorphism from $\Gamma$ to $\mathrm G(\Gamma)$. Since every $[(m,n)]\in \mathrm G(\Gamma)$ can be expressed as $[(m,0)]+[(0,n)]=[(m,0)]-[(n,0)]$, we can identify (via this monomorphism) the elements of $\mathrm G(\Gamma)$ with $m-n$ with $m,n\in \Gamma$.

\begin{prop}[{\cite[Corollary~3.5]{omega}}]\label{p:principalsaturated}
If $\Gamma = \<\alpha_1, \ldots, \alpha_k\> \subset \NN^d$ is full, then for every $\gamma \in \Gamma$,
\[
\mathsf Z(\gamma + \Gamma)= \{x \in \NN^k : \gamma \le \varphi_\Gamma(x)\}.
\]
\end{prop}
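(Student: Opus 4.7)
The plan is to show the two set-theoretic inclusions, unpacking what each side means. By definition, $x \in \mathsf Z(\gamma + \Gamma)$ precisely when $\varphi_\Gamma(x) \in \gamma + \Gamma$, i.e.\ when $\varphi_\Gamma(x) - \gamma \in \Gamma$. On the other hand, $\gamma \le \varphi_\Gamma(x)$ simply means $\varphi_\Gamma(x) - \gamma \in \NN^d$. So the task reduces to showing that, for the particular element $\delta := \varphi_\Gamma(x) - \gamma$ (which a priori lives in $\ZZ^d$), membership in $\Gamma$ is equivalent to membership in $\NN^d$.

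For the ``$\subseteq$'' direction, I would note that $\Gamma \subseteq \NN^d$ by hypothesis, so if $\delta \in \Gamma$ then automatically $\delta \in \NN^d$, giving $\gamma \le \varphi_\Gamma(x)$. This direction does not use fullness and is essentially a one-line verification.

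The reverse inclusion is where the full hypothesis enters, and this is the only substantive step. Suppose $\gamma \le \varphi_\Gamma(x)$, so $\delta \in \NN^d$. Since both $\gamma$ and $\varphi_\Gamma(x)$ lie in $\Gamma$, their difference $\delta$ lies in the group $\mathrm G(\Gamma)$ generated by $\Gamma$ in $\ZZ^d$. Thus $\delta \in \mathrm G(\Gamma) \cap \NN^d$, and invoking Definition~\ref{d:full} (fullness: $\mathrm G(\Gamma) \cap \NN^d = \Gamma$) yields $\delta \in \Gamma$. Consequently $\varphi_\Gamma(x) = \gamma + \delta \in \gamma + \Gamma$, so $x \in \mathsf Z(\gamma + \Gamma)$.

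There is no real obstacle here; the only nontrivial ingredient is the defining property of fullness, and the proposition is essentially a direct translation of that property through the factorization map $\varphi_\Gamma$. The main care to take is just to keep track of which ambient set each element lives in ($\NN^k$ for $x$, $\NN^d$ for $\gamma$ and $\varphi_\Gamma(x)$, and $\ZZ^d$ for their difference before fullness is applied), so that invoking $\mathrm G(\Gamma) \cap \NN^d = \Gamma$ is unambiguous.
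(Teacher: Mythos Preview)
Your proof is correct: both inclusions are handled cleanly, and the use of fullness for the nontrivial direction is exactly what is needed. The paper does not supply its own proof of this proposition---it is quoted verbatim from \cite[Corollary~3.5]{omega}---so there is nothing to compare against, but your argument is precisely the standard one and would serve as a self-contained proof here.
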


% \begin{proof}
% First, if $x \in \mathsf Z(m+\Gamma)$, then $\varphi(x) \in m + \Gamma \subseteq m + F$.  Conversely, suppose $x \in \mathcal F$ with $m \le_F \varphi(x)$.  Then $\varphi(x)=m+f$ for some $f\in F$. Hence $f=\varphi(x)-m\in \mathrm G(\Gamma)\cap F$. By hypothesis, $\Gamma$ is saturated, and consequently $f\in \Gamma$. This leads to $\varphi(x)\in m+\Gamma$, and therefore $x\in \mathsf Z(m+\Gamma)$.
% \end{proof}

%So let us assume that $M$ is a submonoid of $\mathbb N^e$ for some positive integer $e$ (and consequently an affine semigroup). Let $\mathrm G(M)$ be the subgroup of $\mathbb Z^e$ spanned by $M$. We say that $M$ is \emph{full} (sometimes known as saturated in the literature, see for instance \cite{g-hk}) if $M=\mathbb N^e\cap \mathrm G(M)$. As a particular case of \cite[Corollary 3.5]{omega}, we get the following.

%\begin{prop}\label{principal-full}
%Let $M$ be a full affine semigroup minimally generated by $\{m_1,\ldots, m_t\}$, and assume that these elements are the columns of the matrix $A$. Then 
%\[ \mathcal M_i=\mathrm{Minimals}_\le \left\{ x\in \mathbb N^t \colon A x\ge m_i\right\}.\]
%\end{prop}

\begin{example}
The ``full'' hypothesis in Proposition~\ref{p:principalsaturated} cannot be omitted.  For example, the numerical semigroup $\Gamma = \<3,5,7\> \subset \NN$ is not full since $5 - 3 = 2 \in (\mathrm G(\Gamma) \cap \mathbb N) \setminus \Gamma$ (in~fact, the only full numerical semigroup is $\NN$).  In this example, we see that 
\[
\mathrm{Minimals}_\le \mathsf Z(3 + \Gamma) = \{(0,0,2), (0,1,1), (0,2,0), (1,0,0)\},
\]
even though $\mathrm{Minimals}_\le \{x \in \NN^3 : 3x_1 + 5x_2 + 7x_3 \ge 3\} = \{(1,0,0),(0,1,0),(0,0,1)\}$.  
\end{example}

Recalling $A = (\alpha_1 \mid \cdots \mid \alpha_k)$ from above, Proposition~\ref{p:principalsaturated} states that
\[
\mathsf Z(\gamma + \Gamma) = \{x \in \NN^k : Ax \ge \gamma\}, 
\]
meaning that $\mathcal M_i$ can be computed as a Hilbert basis.  At this point, in order to compute $\mathsf t_i(\Gamma)$ using Theorem~\ref{t:tameprincipal}, it remains to compute $\mathsf t_i(\gamma)$ for each $\gamma \in M_i$.  To this end, 
% For a given element $\gamma \in \alpha_i + \Gamma$, Computing $\mathsf t_i(\gamma)$ we have first to calculate the set of factorizations of $\gamma$, $\mathsf Z(\gamma)$, which was shown above to coincide with the set of nonnegative integer solutions of $Ax = \gamma$.  Then $\mathsf t_i(\gamma)$ is the least nonnegative integer $N$ such that for any $z \in \NN^k$ with $Az = \gamma$, there exists $y \in \NN^k$ with $Ay = \gamma$, $y_i \neq 0$ (the existence of such factorization follows from $\gamma \in \alpha_i + \Gamma$) and $\dist(z,y) \le N$.  Notice that if $z_i \neq 0$, we can simply take $y = z$ and $\dist(z,y)=0$. 
Proposition~\ref{p:fulldist} implies that for fixed $z \in \mathrm{Minimals}_\le \mathsf Z(\alpha_i + \Gamma)$ with $z_i = 0$, we can find the closest factorization $w \in \NN^k$ to~$z$ in the fiber of $\varphi_\Gamma(z)$ with $w_i\neq 0$ by minimizing $|w|$ subject to the constraints $Aw = Az$ and $w \cdot z = 0$.  In particular, this expresses $\mathsf t_i(\gamma)$ as the result of an integer linear programming~problem.  

\begin{prop}\label{p:fulldist}
Let $\Gamma = \<\alpha_1, \ldots, \alpha_k\> \subset \NN^d$, fix $i \le k$,  and fix $z \in \mathrm{Minimals}_\le \mathsf Z(\alpha_i + \Gamma)$.  If~$z_i = 0$ and $w \in \NN^k$ fulfills $w_i \neq 0$ and $Aw = Az$, then $z \cdot w = 0$ and $\dist(z,w) = \max\{|z|,|w|\}$.
\end{prop}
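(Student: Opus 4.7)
\medskip

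\noindent\textbf{Proof proposal.} The plan is to establish the two conclusions in order: first that $z$ and $w$ have disjoint support (i.e., $z\cdot w = 0$), and then to derive the distance formula as an immediate consequence.

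For the disjoint-support claim, I would argue by $\le$-minimality of $z$ in $\mathsf Z(\alpha_i + \Gamma)$. Set $u = \gcd(z,w)$, the componentwise minimum of $z$ and $w$, and let $z' = z - u$ and $w' = w - u$. Both $z'$ and $w'$ lie in $\NN^k$ by construction, and $Az' = Az - Au = Aw - Au = Aw'$. The key observation is that $w'_i = w_i - \min(z_i,w_i) = w_i > 0$ because $z_i = 0$; therefore $w' \in e_i + \NN^k$, which gives $Aw' \in \alpha_i + \Gamma$, and hence $Az' \in \alpha_i + \Gamma$. So $z'$ is itself a factorization in $\mathsf Z(\alpha_i + \Gamma)$ with $z' \le z$. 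Minimality of $z$ then forces $z' = z$, hence $u = 0$, i.e.\ $\supp(z) \cap \supp(w) = \nothing$, equivalently $z \cdot w = 0$.

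Once $\gcd(z,w) = 0$ is in hand, the distance computation is immediate from the definition: $z - \gcd(z,w) = z$ and $w - \gcd(z,w) = w$, so $\dist(z,w) = \max(|z - \gcd(z,w)|, |w - \gcd(z,w)|) = \max(|z|, |w|)$.

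The only nontrivial step is the minimality argument, and the subtle point there is to use $z_i = 0$ at exactly the right moment to guarantee that $w'_i$ remains strictly positive after subtracting $u$; this is what certifies that $Az'$ still lies in $\alpha_i + \Gamma$ and closes the contradiction. Everything else is bookkeeping.
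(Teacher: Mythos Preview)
Your proof is correct and is essentially the same argument as the paper's: both set $u=\gcd(z,w)$, use $z_i=0$ to ensure the $i$-th coordinate of $w-u$ stays positive so that $z-u\in\mathsf Z(\alpha_i+\Gamma)$, and then invoke the $\le$-minimality of $z$ to force $u=0$, from which the distance formula follows directly. The only cosmetic difference is that the paper phrases the minimality step as a contradiction (assuming $u\neq 0$), whereas you argue directly that $z'=z$.
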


\begin{proof}
Assume to the contrary that $u = \gcd(z, w) \ne 0$.  As $z_i=0$, we deduce that $u_i=0$. This implies that the $i$-th coordinate of $w - u$ is nonzero, and consequently $w - u \in \mathsf Z(\alpha_i + \Gamma)$.  Hence $z - u \in \mathsf Z(\alpha_i+\Gamma)$ and $z - u < z$, contradicting the minimality of $z$.
\end{proof}

The last necessary observation is that $e_i \in \mathrm{Minimals}_\le \mathsf Z(\alpha_i + \Gamma)$, where $e_i$ is the $i$th column vector of the $k \times k$ identity matrix, so it suffices to consider those $z \in \mathrm{Minimals}_\le \mathsf Z(\alpha_i + \Gamma)$ satisfying $z_i = 0$ when computing $\mathsf t_i(\Gamma)$.  
% As $\varphi_\Gamma(e_i) = \alpha_i$, $\mathsf Z(\alpha_i ) = \{e_i\}$.
We now summarize our procedure in Algorithm~\ref{a:affinetame}.  

\begin{alg}\label{a:affinetame}
Computes $\mathsf t_i(\Gamma)$ for a full affine semigroup $\Gamma = \<\alpha_1, \ldots, \alpha_k\> \subset \NN^d$.  
\begin{algorithmic}
\Function{TameDegreeOfFullAffineSemigroup}{$\Gamma$,$i$}
\State $H \gets $ minimal solutions $z$ of $Ax \ge \alpha_i$ with respect to $\le$ satisfying $z_i = 0$
\If{$H$ is empty}
\State \Return 0 (and thus stop)
\EndIf
% (a Hilbert Basis)
\State $P \gets \emptyset$
\ForAll{$z \in H$}
\State minimize $y_1 + \cdots + y_k$ subject to $Ay = Az$ and $y \cdot z = 0$
\State $P \gets P \cup \{y\}$
\EndFor
\State \Return $\max\{|z|, |y| : z \in H, y \in P\}$.
\EndFunction
\end{algorithmic}
\end{alg}

\begin{thm}\label{t:affinetame}
For any full affine semigroup $\Gamma$, Algorithm~\ref{a:affinetame} returns $\mathsf t_i(\Gamma)$.  
\end{thm}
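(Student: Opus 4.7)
The plan is to chain together the three theoretical reductions developed earlier in this section. Theorem~\ref{t:tameprincipal} reduces $\mathsf t_i(\Gamma)$ to a supremum over a finite set of $\le$-minimal factorizations; Proposition~\ref{p:principalsaturated} realizes this set, via the fullness of $\Gamma$, as the minimal nonnegative integer solutions of the inequality $Ax \ge \alpha_i$; and Proposition~\ref{p:fulldist} converts each individual tame degree computation into the integer linear program that the algorithm solves.

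First, I would combine Theorem~\ref{t:tameprincipal} with Proposition~\ref{p:principalsaturated} to identify $M_i$ with $\{\varphi_\Gamma(x) : x \in \mathrm{Minimals}_\le\{y \in \NN^k : Ay \ge \alpha_i\}\}$, which is precisely the set computed in the algorithm's first line (before restricting to $z_i = 0$). Next, I would argue that among these $\le$-minimals, only those with $z_i = 0$ (the set $H$) can realize the supremum; the remaining minimals $z$, including the standard basis vector $e_i$, produce nothing extra because the factorization $z$ itself already has $z_i > 0$. The degenerate case $H = \emptyset$ should be handled by a short argument: if no $\le$-minimal factorization in $\mathsf Z(\alpha_i + \Gamma)$ has zero $i$-th coordinate, then no factorization of any $\gamma \in M_i$ does either (otherwise one could extract a $\le$-minimal predecessor in $H$), forcing $\mathsf t_i(\Gamma) = 0$ and matching the algorithm's early termination.

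For each $z \in H$, Proposition~\ref{p:fulldist} gives that every $w \in \NN^k$ with $w_i > 0$ and $Aw = Az$ satisfies $z \cdot w = 0$ and $\dist(z, w) = \max(|z|, |w|)$. I would then verify that the worst case in the supremum defining $\mathsf t_i(\varphi_\Gamma(z))$ is realized at $u = z$, giving $\mathsf t_i(\varphi_\Gamma(z)) = \max(|z|, \mu_z)$, where $\mu_z = \min\{|w| : Aw = Az,\ w_i > 0\}$. The algorithm instead minimizes $|y|$ over the relaxed feasible set $\{y : Ay = Az,\ y \cdot z = 0\}$, which by Proposition~\ref{p:fulldist} contains the feasible set defining $\mu_z$. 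Taking the maximum of $\max(|z|, |y|)$ over $z \in H$ then recovers the supremum in Theorem~\ref{t:tameprincipal}.

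The hardest step, I expect, will be justifying that the relaxed integer program attains its optimum at some $y$ with $y_i > 0$, so that the algorithm's minimum agrees with $\mu_z$. The idea will be a symmetry argument: were the optimum $y$ to satisfy $y_i = 0$, then $y$ itself would lie in $\mathsf Z(\alpha_i + \Gamma)$ with disjoint support from $z$ and the same $\le$-minimality, so swapping the roles of $y$ and $z$ would yield the same output $\max(|z|, |y|)$. Making this precise requires careful use of the minimality of $z$ together with Proposition~\ref{p:principalsaturated}, which is why I expect this to be the principal technical obstacle.
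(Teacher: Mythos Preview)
Your overall plan is exactly the paper's: the paper's proof is literally the single line ``Theorem~\ref{t:tameprincipal}, Propositions~\ref{p:principalsaturated} and~\ref{p:fulldist}, and the discussion in between,'' and your three-step reduction traces that discussion faithfully.

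Where you go beyond the paper is in isolating the passage from the constraint $w_i>0$ to the relaxed constraint $y\cdot z=0$ as the crux. You are right that this is the one step the paper asserts rather than proves: the discussion simply states that minimizing $|w|$ subject to $Aw=Az$ and $w\cdot z=0$ yields the closest factorization with $w_i\ne 0$, and offers no justification that the optimum cannot land at some $y$ with $y_i=0$.

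However, your proposed symmetry fix does not work as written. If the optimum $y$ has $y_i=0$, then $y\in\mathsf Z(\alpha_i+\Gamma)$ and $y\cdot z=0$, but there is no reason $y$ should be $\le$-minimal in $\mathsf Z(\alpha_i+\Gamma)$: minimality of $|y|$ among solutions of $Ay=Az$, $y\cdot z=0$ says nothing about $\le$-minimality in the larger set $\mathsf Z(\alpha_i+\Gamma)$, since a strictly smaller $y'\le y$ will in general have $Ay'\ne Az$ and hence be infeasible for the LP. So $y$ need not lie in $H$, and the ``swap the roles of $y$ and $z$'' move has no footing. Passing to a $\le$-minimal $y'\le y$ in $H$ does not help either, because then the LP attached to $y'$ lives over a different fiber $Au=Ay'\ne Az$, and $z$ is not feasible there. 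If you want to close this gap you will need a different mechanism---for instance, arguing directly that the relaxed LP always has an optimum with $y_i>0$, or else showing that whenever it does not, the value $\max(|z|,\mu_z)$ is nonetheless dominated by $\max(|z'|,\nu_{z'})$ for some other $z'\in H$---rather than the symmetry you sketch.
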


\begin{proof}
Theorem~\ref{t:tameprincipal}, Propositions~\ref{p:principalsaturated} and~\ref{p:fulldist}, and the discussion in between.  
\end{proof}

\begin{remark}
Notice that if all solutions $y$ obtained in Algorithm~\ref{a:affinetame} satisfy $|y| \le |z|$, then $\mathsf t_i(\Gamma)$ will coincide with the $\omega$-primality of $\alpha_i$, $\omega(\alpha_i)$ (see \cite[Proposition 3.3]{omega}).  Indeed, this provides an alternative proof that $\omega(\alpha_i) \le \mathsf t_i(\Gamma)$.  
\end{remark}

\subsection{Implementation and benchmarks}

We have implemented Algorithm~\ref{a:affinetame} in the \texttt{numericalsgps} \cite{numericalsgps} \texttt{GAP} \cite{gap} package.  The function \texttt{TameDegreeOfAffineSemigroup} therein has two main methods: one for affine semigroups in general, and another for full affine semigroups (affine semigroups with the attribute \texttt{HasEquations}).  
% (Actually several depending on the packages installed by the user).  
The code is accessible in \cite{numericalsgps} or from the official \texttt{GAP} web page \url{http://www.gap-system.org}. The development version of \texttt{numericalsgps} can be found in \url{https://bitbucket.org/gap-system/numericalsgps}. 

We conclude this section by applying our implementation of Algorithm~\ref{a:affinetame} in several examples of block monoids (Definition~\ref{d:blockmonoid}).  

\begin{defn}\label{d:blockmonoid}
Fix an abelian group $(G,+)$ and a finite subset $H = \{g_1, \ldots, g_n\} \subset G\setminus\{0\}$.  The \emph{block monoid} associated to $H$ is the affine semigroup
\[
\mathcal B(H) = \{(x_1, \ldots, x_n) \in \NN^n : x_1g_1 + \dots + x_ng_n = 0\} \subset \NN^n.
\]
Denote by $\mathcal B(G)$ the block monoid of the set of nonzero elements of $G$.
% that is, the set of sequences $(x_1, \ldots, x_n) \in \NN^n$ satisfying $x_1g_1 + \dots + x_ng_n = 0$.  
\end{defn}

\begin{remark}\label{r:blockmonoid}
It is easy to show that $\mathcal B(H)$ is a full affine semigroup.  Many factorization properties of a monoid can be derived from the factorization properties of the block monoid associated to its class group (see \cite{g-hk} for a thorough treatment).  
\end{remark}

\begin{example}\label{e:tameb2x2x2}
We begin by computing the tame degree of $\Gamma=\mathcal B(\ZZ_2^3)$.  If $B$ denotes the matrix with columns the nonzero elements of $\ZZ_2^3$, then $\Gamma$ is the submonoid of $\NN^7$ of nonnegative integer solutions of the system 
\[
Bx \equiv 0 \bmod 2.
\]

The tame degree of $\Gamma$ can be computed in \texttt{GAP} as follows.
\begin{verbatim}
gap> m:=[ [ 0, 0, 1 ], [ 0, 1, 0 ], [ 0, 1, 1 ], [ 1, 0, 0 ], 
          [ 1, 0, 1 ], [ 1, 1, 0 ], [ 1, 1, 1 ] ];
gap> a:=AffineSemigroup("equations",[TransposedMat(m),[2,2,2]]);
<Affine semigroup>
gap> at:=GeneratorsOfAffineSemigroup(a);
[ [ 0, 0, 0, 0, 0, 2, 0 ], [ 0, 0, 0, 0, 0, 0, 2 ], [ 0, 0, 0, 0, 2, 0, 0 ], 
  [ 0, 0, 0, 2, 0, 0, 0 ], [ 0, 0, 2, 0, 0, 0, 0 ], [ 0, 2, 0, 0, 0, 0, 0 ], 
  [ 2, 0, 0, 0, 0, 0, 0 ], [ 0, 0, 1, 0, 1, 1, 0 ], [ 0, 0, 1, 1, 0, 0, 1 ], 
  [ 0, 1, 0, 0, 1, 0, 1 ], [ 0, 1, 0, 1, 0, 1, 0 ], [ 1, 0, 0, 0, 0, 1, 1 ], 
  [ 1, 0, 0, 1, 1, 0, 0 ], [ 1, 1, 1, 0, 0, 0, 0 ], [ 0, 0, 0, 1, 1, 1, 1 ], 
  [ 0, 1, 1, 0, 0, 1, 1 ], [ 0, 1, 1, 1, 1, 0, 0 ], [ 1, 0, 1, 0, 1, 0, 1 ], 
  [ 1, 0, 1, 1, 0, 1, 0 ], [ 1, 1, 0, 0, 1, 1, 0 ], [ 1, 1, 0, 1, 0, 0, 1 ] ]
gap> TameDegreeOfAffineSemigroup(a);
4
\end{verbatim}

We use the package \texttt{NormalizInterface} \cite{ni} that utilizes the \texttt{Normaliz} library to compute the Hilbert bases.  Although \texttt{4ti2} \cite{4ti2} can also be used to compute Graver and Hilbert bases via the \texttt{4ti2gap} \cite{4ti2gap} package, in this example \texttt{4ti2} was slower than \texttt{Normaliz}.  The computation of $\mathsf t(\Gamma)$ took 4955 ms on an i7 laptop with 16 GB of memory (\texttt{Normaliz} was compiled without \texttt{OpenMP}, and thus was running in a single thread).  For comparison, an implementation of the procedure described in \cite{c-t} took 1,048,757 ms. 
% 8019 vs 1818478 with 8 threads; but this is cpu time, not real time

The runtime can actually be improved further. According to~\cite{tame}, in order to compute $\mathsf t(\Gamma)$ it suffices to compute $\mathsf t_i(\Gamma)$ with $|\alpha_i| = 4$.  
% ($|\alpha_i|$ is the sum of the coordinates of $\alpha_i$). 
This is accomplished as follows, where \texttt{minimalElementsPrincipalIdealOfFullAffineSemigroup} implements Proposition~\ref{p:principalsaturated}.  

\begin{verbatim}
gap> u:=First(at,x->Sum(x)=4);
[ 0, 0, 0, 1, 1, 1, 1 ]
gap> Mu:=minimalElementsPrincipalIdealOfFullAffineSemigroup(u,a);;
gap> facts:=List(Mu, x->FactorizationsVectorWRTList(x,at));;
gap> Set(facts,TameDegreeOfSetOfFactorizations); 
[ 0, 2, 3, 4 ]
\end{verbatim}

%And so $\mathsf t(\mathbb Z_2^3)=4$ (\cite[Theorem 5.1.2]{tame}). Actually the function \texttt{TameDegreeOfSetOfFactorizations} is not optimized to look just at the tame degree with respect to \texttt{u}.
%
%\begin{verbatim}
%minimalElementsPrincipalIdealOfAffineSemigroup:=function(v,a)
%    local mat, cone, n, hom, par, tot, le, ls, one;
%
%    le:=function(a,b)  #ordinary partial order
%    	return ForAll(b-a,x-> x>=0);
%    end;
%
%    ls:=GeneratorsOfAffineSemigroup(a);
%    n:=Length(ls);
%    one:=[List([1..n],_->1)];
%    mat:=TransposedMat(Concatenation(ls,[-v]));
%    cone:=NmzCone(["inhom_inequalities",mat,"signs",one]);
%    NmzCompute(cone,"DualMode"); 	
%    par:=Set(NmzModuleGenerators(cone), f->f{[1..n]});
%    return List(par,x->x*ls);
%end;
%\end{verbatim}
%This example has been performed with the \texttt{numericalsgps} (\cite{numericalsgps}) package with the use of \texttt{NormalizInterface} (\cite{ni}: a \texttt{GAP}, \cite{gap}, interface to \texttt{Normaliz}, \cite{normaliz}).

It took 2 ms to compute \texttt{at} (the set of atoms of $\Gamma$), and 12 ms to compute \texttt{Mu}.  It took another 420 ms to complete the third line above, and the final line finished in 425 ms.  In total, it takes less than a second to compute the tame degree of $\Gamma$. 
\end{example}

\begin{example}\label{e:tameb2x3}
Running Algorithm~\ref{a:affinetame} for $\Gamma = \mathcal B(\mathbb Z_2\times \mathbb Z_3)$ yields
% \[
% \begin{array}{l@{}l}
% \{ & ( 0, 0, 0, 1, 1 ), ( 0, 0, 2, 0, 0 ), ( 1, 1, 0, 0, 0 ), ( 1, 0, 0, 2, 0 ), 
%      ( 3, 0, 0, 0, 0 ), ( 1, 0, 1, 0, 1 ),  ( 0, 1, 0, 0, 2 ), \\
%    & ( 0, 1, 1, 1, 0 ), ( 0, 3, 0, 0, 0 ), ( 2, 0, 0, 0, 2 ), ( 0, 0, 1, 3, 0 ), 
%      ( 2, 0, 1, 1, 0 ),  ( 0, 0, 1, 0, 3 ), ( 0, 2, 0, 2, 0 ), \\
%    & ( 0, 2, 1, 0, 1 ), ( 1, 0, 0, 0, 4 ), ( 0, 1, 0, 4, 0 ), ( 0, 0, 0, 6, 0 ), 
%      ( 0, 0, 0, 0, 6 ) \}.
% \end{array}
% \]
$\mathsf t(\Gamma) = 8$ after 19214 ms; execution of the procedure presented in \cite{c-t} was killed after several hours.
\end{example}

\begin{example}\label{e:tameb2x2x2x2}
% Now that we have two examples where the process works fine, let us simply modify the first a little bit to see the weakness of our method.
As the size of $G$ increases, the number of atoms (i.e.\ the ambient dimension of $\mathcal B(G)$) quickly makes use of Algorithm~\ref{a:affinetame} infeasible.  Indeed, the block monoid $\mathcal B(\mathbb Z_2^4)$ has 323 atoms, and $\mathcal B(\mathbb Z_2^5)$ has 20367 atoms. Although Algorithm~\ref{a:affinetame} is considerably faster than the general-purpose algorithm introduced in \cite{c-t}, it is still not sufficient to compute $\mathsf t(\mathcal B(\mathbb Z_2^4))$.  
% with an i7 laptop in a reasonable amount of time. 

Given below is the analysis for $\mathsf t(\mathcal B(\mathbb Z_2^4))$ by using the supercomputer \url{alhambra.ugr.es}.  
% (just some cores for the example as we show next).

\begin{enumerate}[(1)]
\item First attempt: 16 cores with 30 GB of internal memory.  
\begin{center}
	\begin{tabular}{ccc}
		\includegraphics[width=\textwidth/4]{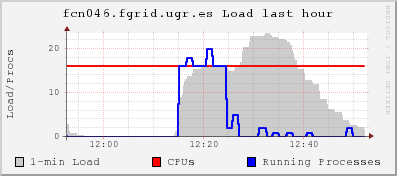} & \includegraphics[width=\textwidth/4]{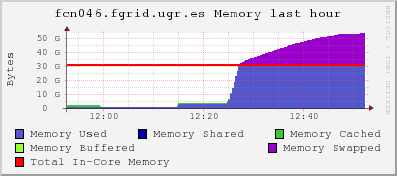} &
		\includegraphics[width=\textwidth/4]{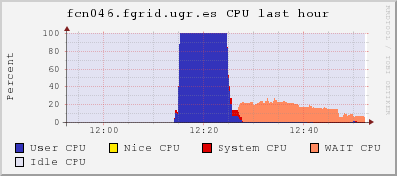} 
	\end{tabular}
\end{center}
Internal memory is quickly exhausted and the system begins using swap memory, decaying the number of processors and the user CPU usage. The process is killed before it finishes.

\item Second attempt: 32 cores with 256 GB of internal memory.  
\begin{center}
	\begin{tabular}{ccc}
		\includegraphics[width=\textwidth/4]{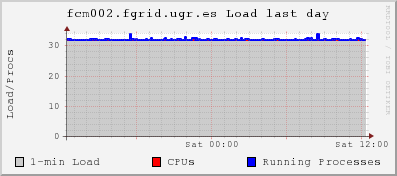} & \includegraphics[width=\textwidth/4]{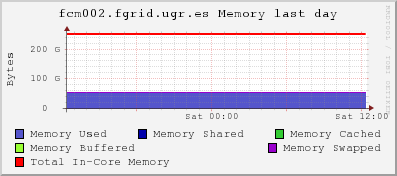} &
		\includegraphics[width=\textwidth/4]{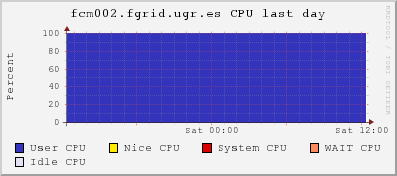} 
	\end{tabular}
\end{center}
% The process was launched on a 72 hour queue.  
After 72 hours of execution, $\texttt{Mu}$ is still not computed.  We contacted W.~Bruns, B.~Ichim and C.~S\"oger (developers of \texttt{Normaliz}) and were told (i) the input has too many variables and few equations to finish in a reasonable time, and (ii) the cone has tens of millions of extremal rays and thus requires extensive memory ($>50$ GB) just to store.  
\end{enumerate}

The point of this example is that even though we may consider $\mathbb Z_2^4$ a ``small'' group, the monoid $\mathcal{B}(\mathbb Z_2^4)$ has 20367 atoms, and this translates to working in an ambient space of dimension 323! Thus, Algorithm~\ref{a:affinetame} will fail to compute the tame degree for this monoid with the computational tools at hand.
\end{example}

% \section*{Conclusions}

% We have given a new interpretation of the tame degree which can be used as a tool for computing it. We have shown that this new machinery allows a nice speed up with respect to the existing one for the calculation of tame degrees. However, we are still far from having the definitive tool, since as seen above, even for simple groups, the number of variables can be too large to be afforded by this method.

% %%%%%%%%%%%%%%%%%%%%%%%%%%%%%%%%%%%%%%%%%%%%%%%%%%%%%%%%%%%%%%%%%%%%%%%%%
\section{Acknowledgements}%%%%%%%%%%%%%%%%%%%%%%%%%%%%%%%%%%%%%%%%%%%%%%%
% %raggedbottom%%%%%%%%%%%%%%%%%%%%%%%%%%%%%%%%%%%%%%%%%%%%%%%%%%%%%%%%%%%%

Supported by FQM-343, FQM-5849 and FEDER funds.  The authors would like to thank Winfried Bruns, Alfred Geroldinger, Alfredo S\'anchez-R.-Navarro,  Richard Sieg and Christof S\"oger  for their comments and discussions on Section~\ref{sec:tamedegree}.  Additionally, we thank Winfried Bruns for pointing out that we only need a single Hilbert basis computation in Algorithm~\ref{a:affinedeltahilbert}, as well as for his assistance with several computations in \texttt{Normaliz}.  The first author also thanks the Centro de Servicios de Inform\'atica y Redes de Comunicaciones (CSIRC), Universidad de Granada, for providing the computing time, especially Rafael Arco Arredondo for his personal time advising which queues should be used in the experiments, and for installing everything needed on the supercomputer \url{alhambra.ugr.es}.

%%%%%%%%%%%%%%%%%%%%%%%%%%%%%%%%%%%%%%%%%%%%%%%%%%%%%%%%%%%%%%%%%%%%%%%%%
%%%%%%%%%%%%%%%%%%%%%%%%%%%%%%%%%%%%%%%%%%%%%%%%%%%%
%%%%%%%%%%%%%%%%%%%%%%%%%%%%%%%%%%%%%%%%%%%%%%%%%%%%%%%%%%%%%%%%%%%%%%%%%

%%%%%%%%%%%%%%%%%%%%%%%%%%%%%%%%%%%%%%%%%%%%%%%%%%%%%%%%%%%%%%%%%%%%%%%%%
\end{document}